\def\fz{\infty}
\def\r{\right}
\def\lf{\left}
\newcommand\SM{\mathscr M}
\def\az{\alpha}
\def\lz{\lambda}
\def\sz{\sigma}
\def\ls{\lesssim}
\newcommand\newdot{{\kern.8pt\cdot\kern.8pt}}
\newcommand\smallbullet{{\vcenter{\hbox{\tiny$\bullet$}}}}
\def\dsum{\displaystyle\sum}
\def\dfrac{\displaystyle\frac}
\def\hs{\hspace{0.3cm}}
\def\e{{\rm e}}
\def\mathpal#1{\mathop{\mathchoice{\text{\rm #1}}%
   {\text{\rm #1}}{\text{\rm #1}}%
   {\text{\rm #1}}}\nolimits}
\newcommand\Hess{\mathpal{Hess}}
\newcommand\Ric{\mathpal{Ric}}
\newcommand\vol{\mathpal{vol}}
\newcommand\tr{\mathpal{tr}}
\newcommand\1{\mathbbm{1}}
\newcommand\R{\mathbb{R}}
\newcommand\N{\mathbb{N}}
\newtheorem{theorem}{Theorem}[section]
\newtheorem{lemma}[theorem]{Lemma}
\newtheorem{corollary}[theorem]{Corollary}
\theoremstyle{definition}
\newtheorem{definition}[theorem]{Definition}
\newtheorem{assumption}[theorem]{Assumption}
\def\supp{{\mathop\mathrm{\,supp\,}}}
\def\d{\mathrm{d}}
\numberwithin{equation}{section}
\begin{document}

\title{\vskip-2.1cm\bf\Large Covariant Riesz transform on differential forms for  $1<p\leq2$
  \footnotetext{\hspace{-0.35cm} 2010 {\it Mathematics Subject
      Classification}. Primary: 35K08; Secondary: 58J65, 35J10, 47G40.
    \endgraf {\it Keywords and phrases}.  Covariant Riesz transform,  Heat kernel, Bochner formula,
    Calder\'{o}n-Zygmund inequalities, 
    Hardy-Littlewood maximal function, 
    Kato's inequality.  \endgraf This work has been supported by
    the National Natural Science Foundation of China (Grant
    No. 11831014,11921001) and  Natural Science Foundation of Zhejiang Provincial (Grant No.LGJ22A010001). The second author acknowledges support by the Fonds
    National de la Recherche Luxembourg (project GEOMREV
    O14/7628746).}}

\author[1]{Li-Juan Cheng} \author[2]{Anton Thalmaier} \author[3,4]{Feng-Yu Wang}

\affil[1]{\small School of Mathematics, Hangzhou Normal
  University,\par
  Hangzhou 311121, People's Republic of China\par
  \texttt{chenglj@zjut.edu.cn}\vspace{1em}}
  
  \affil[2]{\small Department of Mathematics, University of Luxembourg,
  Maison du Nombre,\par
  L-4364 Esch-sur-Alzette, Luxembourg\par
  \texttt{anton.thalmaier@uni.lu}\vspace{1em}}
  
  \affil[3]{\small Center for Applied Mathematics, Tianjin
      University,\par Tianjin 300072, People's Republic of China}
    \affil[4]{\small Department of Mathematics, Swansea University,
      Bay Campus,\par
      Swansea SA1 8EN, United Kingdom\par
      \texttt{wangfy@bnu.edu.cn}}

\date{\today}
\maketitle

\begin{abstract} {\noindent In this paper, we study $L^p$-boundedness ($1<p\leq 2$) of the covariant Riesz transform on differential forms for a class of non-compact weighted Riemannian manifolds without assuming conditions on derivatives of curvature. We present in particular a local version of $L^p$-boundedness of
 Riesz transforms under two natural conditions, namely the curvature-dimension condition, and a
lower bound on the Weitzenb\"{o}ck curvature endomorphism. As an application, the  Calder\'on-Zygmund inequality for $1< p\leq  2$ on weighted manifolds is derived under the curvature-dimension condition as hypothesis.}
\end{abstract}

\tableofcontents

\section{Introduction\label{s1}}
Let $(M,g)$ be a  complete geodesically connected $m$-dimensional
Riemannian manifold, $\nabla$ the Levi-Civita covariant derivative, and 
$\Delta$ the Laplace-Beltrami operator understood as a self-adjoint positive operator on $L^2(M)$. The Riesz transform $\nabla \Delta^{-1 / 2} f$,  introduced by Strichartz \cite{Str83} on Euclidean space, has been investigated in many subsequent papers, see e.g., \cite{Chen92,Barkry85I,Barkry85II,Bakry87} and the references therein, and has been further extended to Riemannian manifolds, e.g.~\cite{PTTS-2004,TX-99,CD-01,CD-03,Li91,ThW-04}.  Since the Riesz transform is bounded in $L^{2}(M)$, by the interpolation theorem, the weak $(1,1)$ property already implies $L^{p}(M)$-boundedness for $p \in(1,2]$.

When it comes to Riemannian vector bundles, boundedness in $L^p$ of  the Riesz transforms  $\d^{(k)} (\Delta^{(k)}+\sigma)^{-1/2}$ and $\delta^{(k-1)}(\Delta^{(k)}+\sigma)^{-1/2}$ has been well considered, see  \cite{Bakry87}, where $\Delta^{(k)}$ is the usual Hodge Laplacian acting on $k$-forms, $\d^{(k)}$ the exterior differential on $k$-forms and $\delta^{(k)}$ the $L^2$-adjoint of $\d^{(k)}$. Let $\nabla$ be the Levi-Civita covariant derivative.
In this paper, we aim to study covariant Riesz transforms $\nabla (\Delta^{(k)}+\sigma)^{-1/2}$ on Riemannian vector bundles for $p \in(1,2]$ which poses comparably more difficulties to deal with. This question has already been addressed by the second and third named author in \cite{ThW-04}, where the authors adopted the method of Coulhon and Duong \cite{TX-99} relying on the doubling volume property, Li-Yau type heat kernel upper bounds and derivative estimates of the heat kernel. Note that the approach in \cite{ThW-04} is of stochastic nature and derivative estimates for the heat kernel are deduced from derivative formulae for semigroups on vector bundles, by means of the methodology of Driver and the second named author \cite{DT01}. In \cite{DT01}   estimates of certain functionals of Brownian motion with respect to the Wiener measure are required; nevertheless  pointwise estimate for the heat kernel
$\e^{-t{\Delta}^{(k)}}(x,y)$  and the derivative estimate of heat kernel $\nabla \e^{-t{\Delta}^{(k)}}(x,y)$ can be obtained from such general derivative formulas, where ${\Delta}^{(k)}$ is the 
unique self-adjoint realization of the Hodge-de Rham Laplacian acting
on $k$-forms under explicit curvature condition, see \cite{BDG-21}. By following a similar approach, Baumgarth, Devyver and G\"{u}neysu \cite{BDG-21}
studied the covariant Riesz transform on $j$-forms, removing the doubling volume property and using uniformly boundedness conditions of the curvature and the derivative  of the curvature on differential forms. These results are further  used to establish  Calder\'{o}n-Zygmund inequalities for $1<p\leq  2$, where
for $\varphi\in C_c^{\infty}(M)$,  the set of smooth
  functions of compact support, if there exist positive constants $C_1$ and $C_2$ such that
\begin{align*}
\|\,|\Hess (\varphi)|\,\|_{p}\leq  C_1 \|\varphi\|_p+C_2  \|\Delta \varphi\|_p
\end{align*}
then the  Calder\'{o}n-Zygmund inequalities holds. Note that the argument in \cite{BDG-21} for establishing  Calder\'{o}n-Zygmund inequalities is from G\"uneysu and Pigola's paper  \cite{GP-15}, where  the  Calder\'{o}n-Zygmund inequalities are equivalent to the $L^p$ boundedness of the operator $\Hess (\Delta +\sigma)^{-1}$ for some constant
$\sigma>0$ and this operator further can be rewritten as
\begin{align*}
\nabla (\Delta^{(1)}+\sigma)^{-1/2} \circ {\rm d}(\Delta+\sigma)^{-1/2}. 
\end{align*}
Thus  
to investigate whether  these $L^p$-Calder\'{o}n-Zygmund inequalities hold  are  reduced to the study of conditions for boundedness
of the classical Riesz transform ${\rm d}(\Delta_{\mu}+\sigma)^{-1/2}$ on
functions and boundedness of the covariant Riesz transform
$\nabla (\Delta_{\mu}^{(1)}+\sigma)^{-1/2}$ on one-forms in $L^p$-sense. Therefore, in \cite{BDG-21}    combining this argument with the result in \cite{ThW-04} yields  that the
$L^p$-Calder\'{o}n-Zygmund inequalities hold for $1<p\leq 2$ if
$$\|R\|_{\infty}<\infty \quad  \mbox{and}   \quad  \|\nabla R\|_{\infty}<\infty,$$
where $R$ is the curvature tensor.

On the other hand, very recently, Cao, Cheng and Thalmaier \cite{CCT} 
   established the $L^p$-Calder\'{o}n-Zygmund inequality for $1< p<2$ by only using the natural
    assumption of a lower Ricci curvature bound. Since the conditions for the $L^p$ boundedness
of the classical Riesz transform ${\rm d}(\Delta_{\mu}+\sigma)^{-1/2}$ for $1< p\leq 2$ are quite weak, it arises us to wondering whether the condition for the $L^p$ boundedness of  the covariant Riesz transform on one-forms in \cite{BDG-21} is too strong.  This work is devoted to investigating this problem
on the $L^p$ boundedness of covariant Riesz transform for $1<p\leq 2$ again.
     
 As explained in \cite{ThW-04}, it is difficult to follow the corresponding argument in  \cite{TX-99} directly concerning derivatives of heat kernel on vector bundle, since the heat kernel $\e^{-t \Delta^{(k)}}(x,y)$ is  linear operator on a vector bundle $E\rightarrow M$ from $E_y$ to $E_x$. 
 In this paper, we aim to overcome this difficulty by using a different approach, that is the Weitzenb\"{o}ck formula and the Gaussian type estimates for some Schr\"{o}dinger heat kernels on manifolds.

Before moving on, let us first introduce some basic notations. Consider a weighted Laplacian $\Delta+\nabla h$ with $h\in C^2_b(M)$. In this paper, we  study the covariant Riesz transform relative to the weighted volume measure $\mu(\mathrm{d} x)=\mathrm{e}^{h(x)} \vol( \mathrm{d} x)$ where $\vol$ is the
Riemannian volume measure on~$M$. We write $\Delta_{\mu}:=\Delta+\nabla h$ where $\Delta_{\mu}$  is understood as a self-adjoint positive operator on  $L^2(\mu)$.
Let $\rho(x,y)$ be the geodesic distance of $x$ and $y$ and $B(x,r)$ the open ball
centered at $x$ of radius $r$. Given a smooth vector bundle $E \rightarrow M$ carrying a canonically given metric and a canonically given covariant derivative, we denote its fiberwise metric by $(\newdot,\newdot)_g$,  the fiberwise norm by $|\cdot|=\sqrt{(\newdot, \newdot)_g}$  and the smooth
sections by $\Gamma_{C^{\infty}}(M, E)$. 
We denote its covariant derivative by
$$
\nabla: \Gamma_{C^{\infty}}(M,\, E) \rightarrow \Gamma_{C^{\infty}}\left(M,\, T^{*} M \otimes E\right).
$$
The Banach space $\Gamma_{L^{p}}(M,\, E)$ consists of equivalent classes of Borel sections $\psi$ of $E \rightarrow M$ such that
$$
\|\psi\|_{p}\equiv\|\psi\|_{L^{p}}:=\|\,|\psi|\,\|_{L^{p}}<\infty
$$
where $\|\,|\psi| \,\|_{L^p}$ denotes the norm of the function $|\psi|$ with respect to $L^{p}(\mu)$. Then $\Gamma_{L^{2}}(E)$ canonically becomes a Hilbert space with the scalar product
$$
\left\langle\psi_{1}, \psi_{2}\right\rangle:=\left\langle\psi_{1}, \psi_{2}\right\rangle_{L^{2}}=\int\left(\psi_{1}, \psi_{2}\right)_g \, \d \mu .
$$

Consider the spaces
$$
\Omega^{k}=\Gamma_{C^{\infty}}\big(M,\, \Lambda^{k} T^{*} M\big)\quad \mbox{and} \quad \Omega^{k}_c=\Gamma_{C^{\infty}_c}\big(M, \, \Lambda^{k} T^{*} M\big)\quad(0 \leq  k \leq  m)
$$
of smooth differential $k$-forms, respectively compactly supported smooth $k$-forms, and denote
the space of smooth $k$-forms in $L^p$ by
\begin{align*}
\Omega_{L^p}^k:=\Gamma_{C^{\infty}\cap L^p(\mu)}\big(M,\Lambda^{k} T^{*} M\big).
\end{align*} 
In terms of the exterior differential $\mathrm{d}^{(k)}: \Omega^{k} \rightarrow \Omega^{k+1}$
on $\Omega^{k}$ and $\delta_{\mu}^{(k+1)}$ the $L^{2}(\mu)$-adjoint of $\d^{(k)}$, i.e., \begin{align*}
\big\langle\delta_{\mu}^{(k+1)} a, b\big\rangle:=\mu\big((\delta_{\mu}^{(k+1)} a, b)_g\big)=\mu\big((a, \mathrm{d}^{(k)}  b)_g\big):=\langle a, \mathrm{d}^{(k)} b\rangle
\end{align*}
for $a\in\Omega^{k+1}$ and $b\in\Omega^{k}$, 
the weighted Hodge Laplacians acting on $0$-, respectively $k$-forms, are given by
\begin{align}
&\Delta^{(0)}_{\mu}:=\delta_{\mu}^{(1)} \mathrm{d}^{(0)}: C^{\infty}(M) \rightarrow C^{\infty}(M),  \notag \\
&\Delta_{\mu}^{(k)}:=\delta_{\mu}^{(k+1)} \mathrm{d}^{(k)}+\mathrm{d}^{(k-1)} \delta_{\mu}^{(k)}: \Omega^k \rightarrow \Omega^k.  \label{Hodge-Laplacian}
\end{align}
Obviously, the canonical commutation rules hold:
$$
\mathrm{d}^{(k-1)} \Delta_{\mu}^{(k-1)}=\Delta_{\mu}^{(k)} \mathrm{d}^{(k-1)}.
$$
To simplify the notation, we write $\Delta_{\mu}=\Delta_{\mu}^{(0)}$ and $\d=\d^{(0)}$.



To this end, we first give some assumptions where we start with notions related to the curvature. Letting $\nabla_{\mu}^{*}$ be the $L^2(\mu)$-adjoint
of $\nabla$ and  $\square_{\mu}=\nabla_{\mu}^{*}\nabla$ the weighted Bochner Laplacian, it is easy to check that 
\begin{align}\label{Bochner-Laplacian}
\square_{\mu}=-\operatorname{tr} \nabla^{2}-\nabla_{\nabla h},
\end{align}
where  $$
\tr\nabla^{2} \eta(\smallbullet):=\sum_i \nabla^{2} \eta\left(\smallbullet, e_{i}, e_{i}\right)
$$
with $\eta$ being a differential $k$-form and $\left(e_{i}\right)$ a local orthonormal frame.  Note that  by definition $\nabla^{2} \eta$ is a tensor of order $(0, k+2)$ and 
$\tr\nabla^{2} \eta$  is independent of the choice of local orthonormal frame $\left(e_{i}\right)$.
  The Weitzenb\"{o}ck formula gives the relationship  between $\square_{\mu}$ and the Hodge Laplacian $\Delta_{\mu}^{(\cdot)}$:
 for any differential $k$-forms $\eta\in\Omega^{k}$, we have
\begin{align}\label{Eq:Weitzenboeck}
\Delta_{\mu}^{(k)} \eta=\square_{\mu}\eta+\mathscr{R}^{(k)}(\eta)- (\Hess h)^{(k)}(\eta),
\end{align}
where in explicit terms the {\it Weitzenb\"{o}ck curvature endomorphism } $\mathscr{R}^{(k)}-(\Hess  h)^{(k)}$ is given by
$$
\big(\mathscr{R}^{(k)}-(\Hess  h)^{(k)}\big) (\smallbullet)=-\sum_{i, j=1}^{m} \theta^{j} \wedge\big(e_{i}\mathop{\lrcorner} \mathrm{R}(e_{j}, e_{i})(\smallbullet)\big)-\sum_{i,j=1}^m e_i\big(e_j(h)\big)\Big( \theta^j\wedge \left(e_i \mathop{\lrcorner} \smallbullet \right) \Big)
$$
for any orthonormal frame $\left(e_{i}\right)_{1\leq i \leq  m}$ with corresponding dual frame $(\theta^j)_{1\leq j\leq  m}$ (see Theorem \ref{pre-theorem1} below). When $k=1$, $\mathscr{R}^{(1)}-(\Hess h)^{(1)}=\Ric-\Hess h$.

Let $\lambda_k(x)$ be the lowest eigenvalue of $(\mathscr{R}^{(k)}-(\Hess  h)^{(k)})(x)$ for $x\in M$. We use the notation
$$
V_k(x):=\lambda^{-}_k(x)=(|\lambda_k(x)|-\lambda_k(x)) / 2 .
$$
Let $P_{t}^{V_k}$ be the semigroup ${\e}^{-t(\Delta_{\mu}-V_k)}$ which has a smooth integral kernel denoted by $p_{t}^{V_k}(x,y)$.

\begin{definition}\label{strongly-positive}
We say that $\Delta_{\mu}-V_k+\sigma$ for some constant $\sigma$ is {\it strongly positive} if the following  condition holds: there exists $A<1$ such that for all $f \in C_c^{\infty}(M)$,
$$
\int_{M} (V_k-\sigma) |f|^{2} \d \mu \leq A \int_{M}|\nabla f|^{2}\, \d \mu.
$$
\end{definition}
We remark that the strong positivity condition  has  its origin in
the  Hardy inequality, see the introduction of \cite{CZ07}.

The following theorem is our first main result.
\begin{theorem}\label{main-them1}
 Let $\sigma_1$, $\sigma_2$ and  $\sigma_2$ be positive constants such that the following three
conditions hold:
\begin{enumerate}[\rm i)]
\item   (local) volume doubling property: for $\alpha>1$,
\begin{align}\label{LD}
\mu(B(x,  \alpha r)) \leq C \mu(B(x, r)) \,\alpha^{m}\exp (\sigma_1(\alpha -1)r),   \quad x\in M,  \tag{{\bf LD}}
\end{align}
holds for all $r>0$ and  some constant $C>0$;

\item  local off-diagonal upper bound of the heat kernel $p^{V_k}_t$:  
\begin{align}\label{Upper-bound}
p^{V_k}_t(x, x)\leq \frac{C\e^{\sigma_2 t}}{\mu(B(x,\sqrt{t}))},\quad x\in M,  \tag{{\bf UE}}
\end{align}
for all $t>0$ and  some constant $C>0$;

\item  
 the operator $\Delta_{\mu}-V_k+\sigma_3$ is strongly positive.
\end{enumerate}
Then there exists a positive constant $\sigma$ depending on $\sigma_1$, $\sigma_2$ and $\sigma_3$  such that the covariant Riesz transform $\nabla (\Delta_{\mu}^{(k)}+\sigma)^{-1/2}$ is bounded in $L^p$ for $p\in (1,2]$. In particular,  $\sigma_1=\sigma_2=\sigma_3=0$ implies   $\sigma=0$.
\end{theorem}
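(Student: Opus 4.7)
My proof plan proceeds in two stages: first an $L^2$-bound deduced from the Weitzenb\"ock formula, Kato's inequality and hypothesis (iii); then weak-type $(1,1)$ via a Coulhon--Duong style Calder\'on--Zygmund argument, in which the missing pointwise gradient estimates on the vector-valued heat kernel are replaced by a semigroup domination through the scalar Schr\"odinger kernel $p_t^{V_k}$ combined with an $L^2$-analyticity bound for the gradient semigroup.

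\textbf{$L^2$-boundedness.} For $\eta\in\Omega^k_c$, the Weitzenb\"ock formula \eqref{Eq:Weitzenboeck} gives
\begin{align*}
\|\nabla\eta\|_2^2=\langle\Delta_\mu^{(k)}\eta,\eta\rangle-\int_M\big((\mathscr{R}^{(k)}-(\Hess h)^{(k)})\eta,\eta\big)_g\,\d\mu\le\langle\Delta_\mu^{(k)}\eta,\eta\rangle+\int_M V_k|\eta|^2\,\d\mu.
\end{align*}
Kato's inequality $|\nabla|\eta||\le|\nabla\eta|$ applied with $f=|\eta|$ in Definition \ref{strongly-positive} yields $\int_M V_k|\eta|^2\,\d\mu\le A\|\nabla\eta\|_2^2+\sigma_3\|\eta\|_2^2$, so that after absorbing $A\|\nabla\eta\|_2^2$ on the left I obtain $(1-A)\|\nabla\eta\|_2^2\le\|(\Delta_\mu^{(k)}+\sigma)^{1/2}\eta\|_2^2$ for every $\sigma\ge\sigma_3$; this gives the desired $L^2$-bound. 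Applying the same computation to $\e^{-t(\Delta_\mu^{(k)}+\sigma)}\eta$ and using analyticity of the semigroup on $L^2$, I also obtain the gradient estimate $\|\nabla \e^{-t(\Delta_\mu^{(k)}+\sigma)}\eta\|_2\lesssim t^{-1/2}\|\eta\|_2$.

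\textbf{Weak $(1,1)$ via subordination and off-diagonal estimates.} I would write
\begin{align*}
\nabla(\Delta_\mu^{(k)}+\sigma)^{-1/2}=\frac{1}{\sqrt\pi}\int_0^\infty\nabla \e^{-t(\Delta_\mu^{(k)}+\sigma)}\,\frac{\d t}{\sqrt t}.
\end{align*}
By Weitzenb\"ock and Kato, the form semigroup is pointwise dominated by the scalar Schr\"odinger semigroup, $|\e^{-t\Delta_\mu^{(k)}}\eta|(x)\le\int_M p_t^{V_k}(x,y)|\eta|(y)\,\d\mu(y)$, so assumption \eqref{Upper-bound} controls the vector-valued heat kernel as well. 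For $\eta$ supported in a ball $B=B(x_0,r)$, I split $\e^{-t(\Delta_\mu^{(k)}+\sigma)}$ into two halves, bound the gradient factor by the $L^2$ estimate above, and control the other factor via the kernel bound; comparing $L^2$- and $L^1$-norms of the intermediate form on dyadic annuli via \eqref{LD} and iterating in the Davies--Gaffney spirit, I arrive at the off-diagonal estimate
\begin{align*}
\int_{M\setminus B(x_0,2r)}|\nabla \e^{-t(\Delta_\mu^{(k)}+\sigma)}\eta|\,\d\mu\le C\e^{-cr^2/t}\,\frac{\|\eta\|_1}{\sqrt t}.
\end{align*}
A Calder\'on--Zygmund decomposition of $|\eta|$ at level $\lambda$, combined with this off-diagonal bound and the $L^2$ estimate from the first stage, yields the weak-$(1,1)$ inequality; Marcinkiewicz interpolation with the strong $(2,2)$ bound then gives $L^p$-boundedness for $1<p\le 2$. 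When $\sigma_1=\sigma_2=\sigma_3=0$ every step permits $\sigma=0$.

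\textbf{Main obstacle.} The derivative formulae of \cite{DT01,ThW-04,BDG-21} used in prior work require pointwise bounds on $\mathrm{R}$ and $\nabla\mathrm{R}$ to produce Gaussian upper bounds for $\nabla \e^{-t\Delta_\mu^{(k)}}(x,y)$, which are unavailable here. The crux of the proof is therefore to obtain the off-diagonal gradient decay \emph{without} any pointwise information on $\nabla p_t^{(k)}$, extracting it solely from the scalar Schr\"odinger bound \eqref{Upper-bound}, the $L^2$-analyticity, and the doubling property \eqref{LD}. The semigroup-domination trick, which transfers the scalar Gaussian upper bound on $p_t^{V_k}$ to the form semigroup, coupled with time-splitting and dyadic-annulus iteration, is what replaces the Driver--Thalmaier-type formulae of \cite{ThW-04} and makes the argument go through under the weak hypotheses of the theorem.
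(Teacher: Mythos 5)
Your overall architecture is correct and mirrors the paper's: $L^2$-boundedness from Weitzenb\"ock plus Kato plus strong positivity (this is exactly Lemma~\ref{lem-L2}), semigroup domination of the form heat kernel by the Schr\"odinger kernel $p_t^{V_k}$, and then a Calder\'on--Zygmund / subordination argument for weak $(1,1)$. You have also correctly identified the crux: one must get off-diagonal decay for $\nabla\e^{-t\Delta_\mu^{(k)}}(x,y)$ with no pointwise information on the gradient of the kernel. But the way you propose to produce this decay does not work, and that is precisely where the paper's real technical content lies (Theorems~\ref{Dkernel-Gaussian} and~\ref{cor-1}).

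The problem is with the step ``split $\e^{-tL}$ into two halves, bound the gradient factor by the $L^2$ estimate above, control the other factor via the kernel bound, compare norms on dyadic annuli, iterate in the Davies--Gaffney spirit.'' The only gradient bound you have at that point is the \emph{global} analyticity estimate $\|\nabla\e^{-(t/2)L}\|_{L^2\to L^2}\lesssim t^{-1/2}$, which carries no spatial localization whatsoever. Writing $\nabla\e^{-tL}\eta=\nabla\e^{-(t/2)L}\bigl(\e^{-(t/2)L}\eta\bigr)$ and using the Gaussian kernel bound on the inner factor gives you decay of $\e^{-(t/2)L}\eta$ away from the support of $\eta$, but applying a non-local operator with only a global $L^2\to L^2$ bound destroys that localization: you end up with $\|\nabla\e^{-tL}\eta\|_{L^2(M)}\lesssim t^{-1/2}\|\eta\|_2$ again and no annulus-by-annulus estimate. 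Davies--Gaffney for $\e^{-tL}$ itself (which one could iterate in the way you describe) does not automatically transfer to $\nabla\e^{-tL}$; obtaining off-diagonal bounds for the gradient semigroup requires a Caccioppoli-type mechanism, i.e.\ an integration by parts against a localizing weight, and that is exactly what is missing from your outline. In particular, in your plan Kato's inequality and strong positivity are only invoked once, for the unweighted $L^2$ estimate, whereas the paper invokes them a second time \emph{inside} the weighted estimate, and this second use is unavoidable.

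What the paper actually does is establish the Gaussian-weighted $L^2$-gradient estimate
\[
\int_M \bigl|\nabla\e^{-t\Delta_\mu^{(k)}}(x,y)\bigr|^2 \e^{2\gamma\rho^2(x,y)/t}\,\mu(\d x)\lesssim \frac{(1+\sigma_3 t)\,\e^{2C_0 t}}{t\,\mu(B(y,\sqrt t))}
\]
by integrating $|\nabla\e^{-t\Delta_\mu^{(k)}}(x,y)|^2$ against the weight $\e^{2\gamma\rho^2/t}\psi(\rho)$ and integrating by parts. This produces a $\nabla_\mu^*\nabla$ term that is converted via Weitzenb\"ock into $\Delta_\mu^{(k)}$ minus the curvature endomorphism; the resulting potential term $\int(V_k-\sigma_3)|\cdot|^2\e^{2\gamma\rho^2/t}\psi$ is absorbed via strong positivity and Kato's inequality, the $\Delta_\mu^{(k)}$ term is handled by Cauchy's integral formula for the analytic semigroup, and the weight derivatives from the integration by parts are the source of the cross terms bounded using Theorem~\ref{L2-estimate}. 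Cauchy--Schwarz together with Lemma~\ref{lem3} then converts this weighted $L^2$ bound into the $L^1$ off-diagonal estimate of Theorem~\ref{cor-1}, which is the precise input you assumed. So your high-level plan is right, but the passage from the scalar kernel bound and $L^2$ analyticity to the off-diagonal gradient decay is a genuine missing step, and it cannot be bridged by time-splitting and annulus iteration alone: you need the Gaussian-weighted integration-by-parts argument, with strong positivity and Kato applied inside the weight.
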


The upper bound estimate of Schr\"{o}dinger heat kernel with an increasing exponential factor have been well studied in 
\cite{Sturm,Simon}. Often, inequality  \eqref{Upper-bound} appears without the increasing exponential factor (i.e. $\sigma_2=0$), which requires stronger
conditions on the curvature and the potential $V_k$. In the following, we use the result from  \cite{Zhang00} to consider the global 
covariant Riesz transform, i.e. $\sigma=0$.

 Let $P_t=\e^{-\Delta_{\mu} t}$ be the semigroup generated by $-\Delta_{\mu}$ and 
$p_t(x,y)$ the corresponding heat kernel with respect to the measure $\mu$. 
Our second main result is the following Theorem \ref{them2}.  It has been proved in \cite{Zhang00} that the doubling volume property \eqref{D} and the on-diagonal estimate \eqref{on-diagonal-Upper-bound} in this theorem, together with condition \eqref{KV}, imply that
\begin{align*}
p_{t}^{V_k}(x, x) \leq \frac{C}{\mu(B(x, \sqrt{t}))}.
\end{align*}
Thus the following result is a consequence of Theorem \ref{main-them1} for $\sigma_1=\sigma_2=\sigma_3=0$.

\begin{theorem}\label{them2}

Suppose the following conditions hold:
\begin{enumerate}[\rm i)]
\item   volume doubling property: for $\alpha>1$,
\begin{align}\label{D}
\mu(B(x,  \alpha r)) \leq C \mu(B(x, r)) \alpha^{m} \qquad \tag{{\bf D}}
\end{align}
holds for some constant $C>0$ and all $x \in M, r>0$;

\item on-diagonal upper bound of the heat kernel:  there exists  constants $C, \delta>0$ such that 
\begin{align}\label{on-diagonal-Upper-bound}
p_t(x, x)\leq \frac{C}{\mu(B(x, \sqrt{t}))}  \tag{{\bf U}}
\end{align}
for all $t>0$ and  $x \in M$, and 
\begin{align}\label{KV}
K(V_k) \equiv \sup _{x \in M} \int_{0}^{\infty} \int_{M} \frac{1}{\mu(B(x, \sqrt{s}))} \e^{-\rho^{2}(x, y) / s} V_k(y)\, \mu(\d y)\,  \d s<\delta;
\end{align}

\item  
 the operator $\Delta_{\mu}-V_k$ is strongly positive on $\Omega_c^k$.
\end{enumerate}
 Then the Riesz transform $\nabla (\Delta_{\mu}^{(k)})^{-1/2}$ on $\Omega^k_{L^p}$ is bounded in $L^p$ for $p\in (1,2]$.
\end{theorem}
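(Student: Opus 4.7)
The plan is to derive Theorem \ref{them2} as a direct specialization of Theorem \ref{main-them1} with $\sigma_1=\sigma_2=\sigma_3=0$, so that the concluding statement $\sigma=0$ in Theorem \ref{main-them1} yields exactly the global covariant Riesz transform $\nabla(\Delta_\mu^{(k)})^{-1/2}$. Thus the entire task reduces to verifying the three hypotheses i)--iii) of Theorem \ref{main-them1} with vanishing constants.

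First, the volume doubling property \eqref{D} is precisely \eqref{LD} with $\sigma_1=0$, since the exponential factor $\exp(\sigma_1(\alpha-1)r)$ becomes $1$; no work is needed here. The strong positivity of $\Delta_\mu-V_k$ on $\Omega_c^k$ assumed in iii) is exactly the strong positivity condition of Definition \ref{strongly-positive} with $\sigma_3=0$, so this hypothesis transfers verbatim. The substantive content is to verify \eqref{Upper-bound} with $\sigma_2=0$, namely
\begin{equation*}
p_t^{V_k}(x,x)\leq \frac{C}{\mu(B(x,\sqrt{t}))}\quad\text{for all }t>0,\ x\in M.
\end{equation*}
This is precisely the Gaussian-type on-diagonal bound for a Schrödinger heat kernel with Kato-class potential established by Zhang in \cite{Zhang00}: assuming \eqref{D} for the background measure, the on-diagonal bound \eqref{U} for the unperturbed kernel $p_t$, and the smallness condition \eqref{KV} on the potential $V_k$ (which is a global Kato-type norm), one obtains the above estimate for the perturbed kernel $p_t^{V_k}$ with a constant $C$ that depends only on $\delta$ and the doubling constant. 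I would therefore invoke the relevant theorem of \cite{Zhang00} directly, checking that our $V_k$ satisfies the Kato-smallness assumption with the $\delta$ fixed by Zhang's proof. This step is the one non-trivial verification, but since it is quoted as a black box from \cite{Zhang00}, the proof amounts to citing the result and confirming that the setting (complete weighted Riemannian manifold with measure $\mu$ and heat kernel $p_t$) matches Zhang's framework.

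With all three hypotheses of Theorem \ref{main-them1} verified with $\sigma_1=\sigma_2=\sigma_3=0$, the final statement of Theorem \ref{main-them1} gives $\sigma=0$, so that $\nabla(\Delta_\mu^{(k)})^{-1/2}$ is bounded on $\Omega_{L^p}^k$ for $p\in(1,2]$, which is the conclusion of Theorem \ref{them2}. The main conceptual obstacle, which has already been handled in the literature, is the transition from an on-diagonal bound for the scalar semigroup $P_t$ to an on-diagonal bound for the Feynman--Kac type semigroup $P_t^{V_k}$; by contrast, the remaining structural steps (reducing covariant Riesz bounds for forms to scalar heat-kernel estimates via the Weitzenböck identity \eqref{Eq:Weitzenboeck}) have already been discharged inside the proof of Theorem \ref{main-them1}, so nothing further is required here.
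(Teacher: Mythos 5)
Your proposal matches the paper's own proof essentially verbatim: the paper invokes the result of \cite{Zhang00} that \eqref{D}, \eqref{on-diagonal-Upper-bound}, and \eqref{KV} together imply $p_t^{V_k}(x,x)\leq C/\mu(B(x,\sqrt{t}))$, and then deduces Theorem \ref{them2} as the special case $\sigma_1=\sigma_2=\sigma_3=0$ of Theorem \ref{main-them1}. No further comment is needed.
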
\goodbreak

Let us compare to known results.
In \cite{ThW-04} for the usual Riemannian manifold, i.e. $h=0$, under the assumption that $\nabla {\rm R} + \nabla \mathscr{R}^{(k)}=0$,  $\mathscr{R}^{(k)}\geq  0$  and  the doubling volume property, it is shown that $\nabla (\Delta^{(k)})^{-1/2}$ has the weak type $(1,1)$ property. In the above theorem, if
$\Delta_{\mu}-V_k$ is strongly positive on $\Omega^k_c$, the lower bound of $\mathscr{R}^{(k)}$
can be relaxed  and  no condition on the derivative of curvature is needed. From this point of view,  our results also improve 
the recent work of Baumgarth, Devyver and G\"{u}neysu  \cite{BDG-21} on the covariant Riesz transform on $k$-forms for $p\in (1,2]$.

It has been observed that the curvature-dimension condition implies the local volume doubling condition (see \cite{GW01}). 
For the localization argument towards the boundedness of Riesz transform, we need the local doubling volume property with respect to $\mu$, which is related  to the following curvature-dimension condition.  
Assume that
\begin{align}\label{eqn-CD}
\Gamma_{2}(f, f):=-\frac{1}{2} \Delta_{\mu}|\nabla f|^{2}+(\nabla \Delta_{\mu} f, \nabla f)_g \geq-K_0|\nabla f|^{2}+\frac{1}{n}(\Delta_{\mu} f)^{2}, \tag{{\bf CD}}
\end{align}
where $K_0 \in \mathbb{R}$ and $n \geq m$ provide a curvature lower bound and a dimension upper bound of $\Delta_{\mu}$, respectively. In the case $\nabla h=0$ this condition is equivalent to $\Ric\geq-K_0$ and then the curvature-dimension condition holds for $n=m$. When $\nabla h \neq 0$ however, typically $n$ is larger than $m$. Indeed, the curvature-dimension condition can be written as
$$
\operatorname{Ric}_{h}^{(n-m)}(X, X) \geq-K_0|X|^{2}, \quad X \in T M
$$
where for $\alpha>0$, the $\alpha$-Ricci curvature of the weighted Laplacian $\Delta_{\mu}$ is defined as
$$
\operatorname{Ric}_{h}^{\alpha}:=\operatorname{Ric}-\text { Hess }{h}-\frac{1}{\alpha} \nabla h \otimes \nabla h.
$$
This condition implies that
\begin{align}\label{eqn-Ric}
\operatorname{Ric}_{h}(X, X):=\operatorname{Ric}(X,X)- (\text {Hess }{h})(X,X) \geq-K_0|X|^{2}. \tag{{\bf Ric}}
\end{align}

Assuming the curvature-dimension curvature condition \eqref{eqn-CD} then in particular the local doubling assumption with respect to $\mu$ holds, see \cite{GW01, Qian} for details, i.e., there exists a constant $L>0$ such that
\begin{align}\label{eqn-GD}
\mu(B(y, \alpha r)) \leq  C \mu(B(y, r)) \alpha^{m} \exp (L(\alpha-1) r), \quad y\in M,\ r>0,\ \alpha>1.
\end{align}


 \begin{assumption}\label{Lower-Rich}
 For  $k\in \mathbb{N}$ and $k\geq 2$,
there exists $K\in \R$ such that
$$
-K=\min \left\{\left(\big(\mathscr{R}^{(k)}-(\text {Hess }h)^{(k)}\big) v, v\right)_g: v \in \Lambda^{(k)} T_{x} M,\  |v|=1,\  x \in M\right\} .
$$
\end{assumption}

Obviously, Assumption \ref{Lower-Rich}  implies that
$\mathscr{R}^{(k)}-(\text {Hess }h)^{(k)}$ is bounded below by $-K$ so that
$-V_k+K^+\geq 0$, hence the operator 
$\Delta_{\mu}-V_k+K^{+}$ is  strongly positive.
Assume that \eqref{eqn-CD} holds for some constant $K_0$. On the one hand \eqref{eqn-CD} implies the local volume doubling property. On the other hand, it implies  the lower Ricci curvature bound \eqref{eqn-Ric}, which is further used to derive the Gaussian type estimate of $p_t(x,y)$ (see \cite[Theorem 2.4.4]{Wbook14}). We then conclude that for any $\alpha \in\left(0,1/4\right)$ there exist constants $C_{1}(\alpha), C_{2}(\alpha)>0$ such that
\begin{align}\label{Gaussian}
|p^{V_k}_t(x,y)|\leq \e^{K^+t}p_t(x,y) \leq \frac{C_{1}(\alpha)}{\mu(B(x, \sqrt{t}))} \exp \left(-\frac{\alpha \rho(x, y)^{2}}{t}+(K^++C_{2}(\alpha)) t\right) 
\end{align}
for $t> 0$. As a consequence, we have the following corollary  from Theorem \ref{main-them1} directly.
\begin{corollary}\label{them3}
Assume \eqref{eqn-CD} holds for some $K_0\geq 0$. Then 
there exists a constant $\sigma>0$ such that the Riesz transform $\nabla (\Delta_{\mu}^{(1)}+\sigma)^{-1/2}$ 
is bounded in $L^p(\mu)$ for $1<p\leq 2$. If in addition  Assumption \ref{Lower-Rich} holds for some $k\geq 2$,
then there exists $\sigma>0$ such that the Riesz transform $\nabla (\Delta_{\mu}^{(k)}+\sigma)^{-1/2}$ on $\Omega_{L^p}^{k}$
is bounded in $L^p(\mu)$ for  $1<p\leq 2$.
\end{corollary}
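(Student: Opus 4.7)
The plan is to apply Theorem~\ref{main-them1} by verifying each of its three hypotheses under the present assumptions, handling the cases $k=1$ and $k\geq 2$ in parallel; only the source of the pointwise lower bound on the Weitzenb\"ock endomorphism $\mathscr{R}^{(k)} - (\Hess h)^{(k)}$ differs between them.

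First, for hypothesis (i), the local volume doubling property \eqref{LD} with $\sigma_1 := L$ is exactly \eqref{eqn-GD}, which the excerpt records as a direct consequence of \eqref{eqn-CD}. For hypothesis (ii), I would specialize the Gaussian-type estimate \eqref{Gaussian} to $y=x$, obtaining
$$
p_t^{V_k}(x,x) \leq \frac{C_1(\alpha)}{\mu(B(x,\sqrt{t}))}\exp\bigl((K^+ + C_2(\alpha))\,t\bigr),
$$
which is \eqref{Upper-bound} with $\sigma_2 := K^+ + C_2(\alpha)$. Here $K = K_0$ when $k=1$, since \eqref{eqn-CD} implies \eqref{eqn-Ric}, whence $\mathscr{R}^{(1)} - (\Hess h)^{(1)} = \Ric - \Hess h \geq -K_0$; for $k\geq 2$, the constant $K$ is instead supplied by Assumption~\ref{Lower-Rich}.

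For hypothesis (iii), the same pointwise lower bound $\mathscr{R}^{(k)} - (\Hess h)^{(k)} \geq -K$ gives $\lambda_k \geq -K$ and therefore $V_k = \lambda_k^- \leq K^+$ everywhere on $M$. Choosing $\sigma_3 := K^+$ makes $V_k - \sigma_3 \leq 0$ pointwise, so that for every $f\in C_c^\infty(M)$ and any $A<1$,
$$
\int_M (V_k - \sigma_3)\,|f|^2\,\d\mu \leq 0 \leq A \int_M |\nabla f|^2\,\d\mu,
$$
which establishes strong positivity of $\Delta_\mu - V_k + \sigma_3$ in the sense of Definition~\ref{strongly-positive}.

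With all three hypotheses verified, Theorem~\ref{main-them1} produces a constant $\sigma>0$ (depending only on $\sigma_1,\sigma_2,\sigma_3$) such that $\nabla(\Delta_\mu^{(k)} + \sigma)^{-1/2}$ is bounded on $L^p$ for $p\in(1,2]$, which yields both conclusions of the corollary. The argument is essentially a verification and I do not anticipate a genuine obstacle; the only point requiring care is that for $k\geq 2$ the curvature-dimension condition \eqref{eqn-CD} alone no longer controls the Weitzenb\"ock curvature on $\Lambda^k T^*M$, which is precisely why Assumption~\ref{Lower-Rich} is added in that range to recover the pointwise bound on $V_k$ needed for both (ii) and (iii).
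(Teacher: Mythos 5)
Your proposal is correct and matches the paper's own (implicit) proof almost line by line: verify hypothesis (i) of Theorem~\ref{main-them1} via \eqref{eqn-GD}, hypothesis (ii) by specializing the Gaussian estimate \eqref{Gaussian} to the diagonal (the bound $V_k\leq K^+$ giving $p_t^{V_k}\leq\e^{K^+t}p_t$, with $K=K_0$ for $k=1$ from \eqref{eqn-Ric} and $K$ from Assumption~\ref{Lower-Rich} for $k\geq 2$), and hypothesis (iii) by the trivial strong positivity coming from $V_k-K^+\leq 0$. The observation that \eqref{eqn-CD} no longer controls the Weitzenb\"ock endomorphism on $\Lambda^kT^*M$ for $k\geq 2$ is exactly why the paper adds Assumption~\ref{Lower-Rich} there, so your account of the role of that assumption is on target.
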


As explained at the beginning, the result of Theorem \ref{them3} implies the Calder\'on-Zygmund inequality for $1< p< 2$.
 We say that an
  $L^p(\mu)$-Calder\'{o}n-Zygmund inequality holds on $M$ if there exist
  two constants $C_1,C_2>0$ such that
  \begin{equation}\label{CZ}
    \big\|\Hess(\varphi)\big\|_{p}\leq C_1\|\varphi\|_{p}+C_2\|\Delta_{\mu} \varphi\|_{{p}} \tag{{\bf CZ}$_{\mu}(p)$}
  \end{equation}
  for every function $\varphi\in C_c^{\infty}(M)$. We denote this inequality by \ref{CZ}. G\"uneysu and Pigola \cite{GP-15} observed that under Calder\'{o}n-Zygmund inequalities,
if $M$ is geodesically complete and admits a sequence of
Laplacian cut-off functions (this is the case e.g. if $M$ has non-negative Ricci curvature; for more general curvature conditions see \cite{Gu16} and \cite{BS18}), then $H_0^{2,p}(M)=H^{2,p}(M)$ holds for
all $1<p<\infty$. We refer the reader to~\cite{GP-2019} for further applications of Calder\'{o}n-Zygmund inequalities. 

In general, \ref{CZ} inequalities may hold or fail on $M$, depending on the
underlying Riemannian geometry, which leads to the question which
geometric assumptions on $M$ guarantee \ref{CZ} and how the \ref{CZ}-constants $C_1,C_2$ depend on the geometric entities.
  In \cite{GP-15} two methods appear for attacking Calder\'{o}n-Zygmund inequalities: the
first one depends on appropriate elliptic estimates under conditions
on harmonic bounds of the injectivity radius, while the second one
uses boundedness results for the covariant Riesz transform in $L^p$ for $1<p\leq 2$ from
\cite{ThW-04}.  Whereas conditions on harmonic bounds of the
injectivity radius are usually difficult to verify, the second
approach relies on probabilistic covariant derivative formulae for
heat semigroups and has the advantage to avoid assumptions on the
injectivity radius. Along the main idea of this second method in \cite{GP-15}, 
 Theorem \ref{them2} permits
 to establish \ref{CZ} for $1<p\leq 2$ on weighted manifolds along
 the same approach  but only using
 the curvature-dimension condition.

 \begin{theorem}\label{them-CZ}
  Let $(M,g)$ be a complete Riemannian manifold satisfying
  \eqref{eqn-CD}. Let $1<p<2$ be fixed. Then there exists a constant
  $\sigma>0$ such that the operator $\Hess (\Delta_{\mu}+\sigma)^{-1}$ is
  bounded in $L^p(\mu)$, and in particular~\ref{CZ} holds.
\end{theorem}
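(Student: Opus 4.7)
The plan is to reduce the statement to the composition
$$\Hess (\Delta_\mu+\sigma)^{-1}=\nabla\,(\Delta_\mu^{(1)}+\sigma)^{-1/2}\circ \d\,(\Delta_\mu+\sigma)^{-1/2},$$
which was already recalled in the introduction. Once this factorization is available on $C_c^{\infty}(M)$ (and extended by density to $L^p(\mu)$), the $L^p$-boundedness of $\Hess(\Delta_\mu+\sigma)^{-1}$ follows as soon as the two factors are bounded in $L^p(\mu)$ for the chosen $p\in(1,2)$. The first factor is handled by Corollary \ref{them3}: under \eqref{eqn-CD} the covariant Riesz transform $\nabla(\Delta_\mu^{(1)}+\sigma)^{-1/2}$ on one-forms is bounded in $L^p(\mu)$ for $1<p\leq 2$, provided $\sigma>0$ is taken sufficiently large (the relevant Weitzenböck curvature on one-forms is $\Ric-\Hess h$, which is bounded below by $-K_0$ by \eqref{eqn-Ric}, so the strong positivity of $\Delta_\mu-V_1+\sigma$ holds for any $\sigma>K_0^+$).

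For the second factor, the scalar Riesz transform $\d(\Delta_\mu+\sigma)^{-1/2}$ on functions, I would invoke the classical theory: under the curvature-dimension condition \eqref{eqn-CD} one has both the local doubling property \eqref{eqn-GD} and Gaussian upper bounds for $p_t(x,y)$, so that Bakry's gradient estimate together with the Coulhon–Duong criterion (or, in the weighted setting, its extension used in \cite{CCT}) yields boundedness of $\d(\Delta_\mu+\sigma)^{-1/2}$ on $L^p(\mu)$ for every $1<p\leq 2$ and every $\sigma$ strictly larger than some threshold depending on $K_0$. Choosing $\sigma$ large enough so that both factors are bounded simultaneously produces the required $\sigma>0$ in the statement.

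Combining the two, $\Hess(\Delta_\mu+\sigma)^{-1}\colon L^p(\mu)\to \Gamma_{L^p}(M,T^*M\otimes T^*M)$ is bounded. To conclude \ref{CZ}, I would follow the G\"uneysu–Pigola equivalence \cite{GP-15}: for $\varphi\in C_c^{\infty}(M)$, write $\varphi=(\Delta_\mu+\sigma)^{-1}\psi$ with $\psi=(\Delta_\mu+\sigma)\varphi\in L^p(\mu)$, so that
$$\big\|\Hess(\varphi)\big\|_p=\big\|\Hess(\Delta_\mu+\sigma)^{-1}\psi\big\|_p\lesssim \|\psi\|_p\leq \|\Delta_\mu\varphi\|_p+\sigma\|\varphi\|_p,$$
which is precisely \ref{CZ} with $C_1=\sigma C$, $C_2=C$.

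The main obstacle, in my view, is the careful justification of the factorization and its extension from a dense core to all of $L^p(\mu)$: one needs that $\d(\Delta_\mu+\sigma)^{-1/2}\varphi$ indeed lies in the natural domain of $\nabla(\Delta_\mu^{(1)}+\sigma)^{-1/2}$, which is a closability/commutation issue under the mere assumption \eqref{eqn-CD} (no derivative-of-curvature hypothesis). This is typically dealt with by first working on $C_c^{\infty}(M)$ using functional calculus and the intertwining $\d\Delta_\mu=\Delta_\mu^{(1)}\d$, and then passing to the limit using the already established $L^p$-bounds of each factor on its respective core; beyond this point the argument is routine.
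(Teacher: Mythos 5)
Your proposal follows essentially the same route as the paper: factor $\Hess(\Delta_\mu+\sigma)^{-1}$ as $\nabla(\Delta_\mu^{(1)}+\sigma)^{-1/2}\circ \d(\Delta_\mu+\sigma)^{-1/2}$ via the G\"uneysu--Pigola reduction, invoke the newly proved $L^p$-boundedness of the covariant Riesz transform on one-forms under \eqref{eqn-CD}, and handle the scalar Riesz transform by the Coulhon--Duong-type criterion using local doubling and the short-time Gaussian heat kernel bound implied by \eqref{eqn-Ric}. Your explicit check that $V_1$ is bounded (hence $\Delta_\mu-V_1+\sigma$ is strongly positive for $\sigma>K_0^+$) and your remark on the density/closability step are both sound and only make explicit what the paper leaves implicit by citing \cite{GP-15}.
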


Comparing Theorem \ref{them-CZ} with existing results on \ref{CZ},
it should be pointed out that  the result is valid without any
injectivity radius assumptions and boundedness of
$\|R\|_{\infty}$ and $\|\nabla R\|_{\infty}$ as in \cite{GP-15}.
Our  result extends \cite{CCT}
to the weighted manifold  by only requiring the curvature-dimension condition.

The paper is organized as follows. In Section 2 we present $L^2$ and $L^1$ weighted derivative estimates for the heat kernel on differential forms (see Theorems \ref{Dkernel-Gaussian} and \ref{cor-1}). These estimates are applied in Section~\ref{Sect3} to study the $L^{p}$-boundedness ($1<p \leq  2$) of Riesz transforms for differential forms on Riemannian manifolds with a metric connection (see Theorem \ref{main-them1}). Moreover, Theorem \ref{them3} gives a local version of covariant Riesz transform on Riemannian forms. As application, Theorem \ref{them3}
is used to obtain the Calder\'{o}n-Zygmund inequalities for $p\in (1,2]$.


\smallskip{\bf Acknowledgements.} The authors are indebted to Batu
G\"uneysu, Stefano Pigola and Giona Veronelli for very helpful
comments on the topics of this paper.\goodbreak

\section{Heat kernel  estimates }

\subsection{Preliminaries}

Let us first recall the interior product. 
\begin{definition}\label{def1}
 The interior product $X\mathop{\lrcorner} a\in\Omega^{k-1}$ corresponds to the contraction of $a\in\Omega^{k}$ with a vector field $X \in \Gamma(T M)$ and is defined as 
$$
X\mathop{\lrcorner} a\left(X_{1}, \ldots, X_{k-1}\right):=a\left(X, X_{1}, \ldots, X_{k-1}\right), \quad \forall X_{1}, \ldots, X_{k-1} \in \Gamma(T M).
$$
The interior product is an anti-derivation, i.e.,
$$
X\mathop{\lrcorner}\,(a \wedge b)=(X\lrcorner\, a) \wedge b+(-1)^{k} a \wedge(X\mathop{\lrcorner}b) \quad \forall a \in \Omega^{k}, \ b \in \Omega^1 .
$$
\end{definition}

The Weitzenb\"{o}ck formula relates the weighted Hodge-de Rham Laplacian to the weighted Bochner Laplacian on $(M, g)$. 

\begin{theorem}[Weitzenb\"{o}ck formula]\label{pre-theorem1}
 For all differential $k$-forms $\eta \in \Omega^{k}$, we have
$$
\Delta_{\mu}^{(k)} \eta=\square\eta-\nabla_{\nabla h} \eta+\mathscr{R}^{(k)}(\eta)- ({\rm Hess}\, h)^{(k)}(\eta),
$$
where $\mathscr{R}^{(k)}:\Omega^k \rightarrow \Omega^k$ is given by
$$
\mathscr{R}^{(k)}(\eta)=-\sum_{i, j=1}^{m} \theta^{j} \wedge \left(e_{i} \mathop{\lrcorner}\mathrm{R}\big(e_{j}, e_{i}\big)(\eta)\right)
$$
 and $(\Hess h)^{(k)}:\Omega^k \rightarrow \Omega^k$ by
\begin{align*}
(\Hess  h)^{(k)} (\eta)=\sum_{i,j=1}^m e_i(e_j(h))\theta^j\wedge \left(e_i \mathop{\lrcorner}\eta \right),
\end{align*}
for any orthonormal frame $\left(e_{i}\right)_{1\leq i \leq  m}$ and corresponding dual frame $(\theta^{j})_{1\leq  j \leq  m}$ such that $\nabla e_i=0$, $\nabla \theta^j=0$ and  $\theta^j(e_i)=\delta_i^j$.
\end{theorem}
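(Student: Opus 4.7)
My plan is to split the formula into its ``unweighted'' core and a weighted correction that only involves $h$. The classical Weitzenb\"ock identity
\[
\Delta^{(k)}\eta = \square\eta + \mathscr{R}^{(k)}(\eta),
\]
where $\Delta^{(k)} = \delta^{(k+1)} d^{(k)} + d^{(k-1)}\delta^{(k)}$ is the unweighted Hodge Laplacian and $\square = -\tr\nabla^{2}$, will be my starting point. I would establish it by fixing a point $x_{0}\in M$ and a synchronous orthonormal frame $(e_{i})$ (one with $\nabla e_{i}=0$ at $x_{0}$), then using the local formulas $d^{(k)} = \sum_{i}\theta^{i}\wedge\nabla_{e_{i}}$ and $\delta^{(k)} = -\sum_{i} e_{i}\mathop{\lrcorner}\nabla_{e_{i}}$ to expand $\delta d + d\delta$ at $x_{0}$. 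Swapping the two $\nabla$'s generates exactly a commutator $[\nabla_{e_{j}},\nabla_{e_{i}}]$, whose tensorial part on $\Lambda^{k}T^{*}M$ is $-\mathrm{R}(e_{j},e_{i})$; the skew-symmetrizing factor $\theta^{j}\wedge(e_{i}\mathop{\lrcorner}\,\smallbullet)$ collects precisely $\mathscr{R}^{(k)}$.

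Next I would reduce the weighted identity to the unweighted one. Starting from
\[
\big\langle \delta_{\mu}^{(k+1)} a, b\big\rangle_{L^{2}(\mu)} = \langle a, d^{(k)} b\rangle_{L^{2}(\mu)},
\]
an integration by parts with respect to $\mu = \mathrm{e}^{h}\vol$ yields $\delta_{\mu}^{(k+1)} = \delta^{(k+1)} - \nabla h \mathop{\lrcorner}\,\smallbullet$. Since $d$ is unchanged, this gives
\[
\Delta_{\mu}^{(k)} - \Delta^{(k)} = -\bigl(\nabla h\mathop{\lrcorner} d^{(k)} + d^{(k-1)}\,\nabla h\mathop{\lrcorner}\,\smallbullet\bigr)
= -\mathcal{L}_{\nabla h}
\]
by Cartan's magic formula.

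The remaining step is to decompose the Lie derivative into its covariant part and a tensorial part in $h$, namely
\[
\mathcal{L}_{\nabla h}\eta = \nabla_{\nabla h}\eta + (\Hess h)^{(k)}(\eta).
\]
For a $1$-form $\omega$ one checks directly that $(\mathcal{L}_{X}\omega - \nabla_{X}\omega)(Y) = \omega(\nabla_{Y}X)$; specializing $X=\nabla h$ and writing $\nabla_{Y}\nabla h$ in the synchronous frame recovers the expression $\sum_{i,j} e_{i}(e_{j}(h))\theta^{j}\wedge(e_{i}\mathop{\lrcorner}\omega)$. The general case follows because both $\mathcal{L}_{\nabla h}-\nabla_{\nabla h}$ and $(\Hess h)^{(k)}$ are tensorial derivations that coincide on $\Omega^{0}$ and $\Omega^{1}$ and commute with the wedge product, so they agree on all of $\Omega^{k}$ by induction on $k$. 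Combining the three steps gives the claimed formula.

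The main obstacle, if any, is the bookkeeping in the classical step: keeping track of signs when moving $e_{i}\mathop{\lrcorner}$ past $\theta^{j}\wedge$ and making sure that the commutators assemble into $\mathscr{R}^{(k)}$ with the right coefficient $-\sum_{i,j}\theta^{j}\wedge(e_{i}\mathop{\lrcorner} \mathrm{R}(e_{j},e_{i})(\,\smallbullet\,))$. The synchronous frame kills all non-tensorial terms at $x_{0}$, which is what makes this identification clean and independent of the choice of frame.
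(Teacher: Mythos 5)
Your proof is correct, but it follows a genuinely different route from the paper's. The paper expands $\Delta_{\mu}^{(k)}\eta = \delta_{\mu}^{(k+1)}\mathrm{d}^{(k)}\eta + \mathrm{d}^{(k-1)}\delta_{\mu}^{(k)}\eta$ in one direct computation from the local formulas $\mathrm{d}^{(k)}=\sum_j\theta^j\wedge\nabla_{e_j}$ and $\delta_{\mu}^{(k)}=-\sum_j\e^{-h}e_j\mathop{\lrcorner}\nabla_{e_j}(\e^h\cdot)$ in a synchronous frame, isolates the $h$-dependent terms as $-\nabla_{\nabla h}\eta-(\Hess h)^{(k)}(\eta)$, and then invokes the unweighted identity $\Delta^{(k)}\eta=\square\eta+\mathscr{R}^{(k)}(\eta)$ for the remainder. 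Your argument instead factors the weighted correction conceptually: you establish $\delta_{\mu}^{(k+1)}=\delta^{(k+1)}-\nabla h\mathop{\lrcorner}$, observe via Cartan's magic formula that $\Delta_{\mu}^{(k)}-\Delta^{(k)}=-(\nabla h\mathop{\lrcorner}\mathrm{d}+\mathrm{d}\,\nabla h\mathop{\lrcorner})=-\mathcal{L}_{\nabla h}$, and then split $\mathcal{L}_{\nabla h}=\nabla_{\nabla h}+(\Hess h)^{(k)}$ by noting that $\mathcal{L}_X-\nabla_X$ is a tensorial derivation determined by its action on one-forms. This is cleaner conceptually: it separates the purely Riemannian Weitzenb\"ock computation from the effect of the weight, and the frame-independent identification via Cartan's formula is transparent. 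The paper's one-shot expansion is more self-contained and avoids appealing to the unweighted Weitzenb\"ock formula and Cartan's identity as external inputs, but the bookkeeping is heavier. One small wording slip: $\mathcal{L}_{\nabla h}-\nabla_{\nabla h}$ and $(\Hess h)^{(k)}$ do not ``commute with the wedge product''; rather, they are degree-zero derivations of it (each satisfies the Leibniz rule), which is what you actually use in the induction on $k$. Also, when you write $\sum_{i,j}e_i(e_j(h))\theta^j\wedge(e_i\mathop{\lrcorner}\omega)$ for a one-form $\omega$, note that the symmetry of the Hessian is what reconciles the index order you get from $\omega(\nabla_Y\nabla h)$ with the index order in the paper's definition of $(\Hess h)^{(k)}$; it is worth stating that explicitly.
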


\begin{proof}
It is well known that
\begin{align*}
{\rm d}^{(k)}=\sum_{j=1}^m\theta^j\wedge \nabla _{e_j}\quad\text{and}\quad  \delta_{\mu}^{(k)}(\cdot)=-\sum_{j=1}^m\e^{-h}e_j \mathop{\lrcorner}\nabla _{e_j}(\e^{h}(\cdot)). 
\end{align*}
Let $\Delta ^{(k)}$ be the usual Hodge Laplacian acting on $k$-form. Since orthonormal frames $(e_{i})_{1\leq  i \leq  m}$ and  dual frames $\left(\theta^{j}\right)_{1\leq j \leq  m}$ satisfy $\nabla e_i=0$ and $\nabla \theta^j=0$, we obtain for $\eta\in \Omega^k$, using the summation convention,
\begin{align*}
\Delta_{\mu}^{(k)}\eta&=-\e^{-h}e_j \mathop{\lrcorner} \nabla _{e_j}(\e^{h}(\theta^i\wedge \nabla _{e_i}\eta))-\theta^i\wedge \nabla _{e_i}(\e^{-h}e_j \mathop{\lrcorner} \nabla _{e_j}(\e^{h}\eta))\\
&=-e_j(h)e_j \mathop{\lrcorner} (\theta^i\wedge \nabla _{e_i}\eta)-e_j \mathop{\lrcorner} \nabla _{e_j}(\theta^i\wedge \nabla _{e_i}\eta)\\
&\quad -\theta^i\wedge \nabla _{e_i}(e_j(h)e_j\mathop{\lrcorner} \eta)-\theta^j\wedge \nabla _{e_j}(e_j \mathop{\lrcorner} \nabla _{e_j} \eta)\\
&=-e_j(h)e_j \mathop{\lrcorner} (\theta^i\wedge \nabla _{e_i}\eta)-e_i(e_j(h))\theta^i\wedge (e_j\mathop{\lrcorner} \nabla _{e_j}\eta)-e_j(h)\theta^i\wedge \nabla _{e_i}(e_j\mathop{\lrcorner} \eta)+\Delta^{(k)}\eta\\
&=-e_j(h)\nabla _{e_j}\eta+e_j(h) (\theta^i\wedge (e_j \mathop{\lrcorner}\nabla _{e_i}\eta))-e_i(e_j(h))\theta^i\wedge (e_j\mathop{\lrcorner} \eta)-e_j(h)(\theta^i\wedge (e_j\mathop{\lrcorner}\nabla _{e_i} \eta))\\
&\quad +\square \eta-\mathscr{R}^{(k)}(\eta)\\
&=-e_j(h)\nabla _{e_j}\eta-e_i(e_j(h))\theta^i\wedge (e_j\mathop{\lrcorner} \eta)+\square \eta-\mathscr{R}^{(k)}(\eta)\\
&= -\nabla _{\nabla h}\eta-({\rm Hess}\, h)^{(k)}(\eta) +\square \eta-\mathscr{R}^{(k)}(\eta),                    
\end{align*}
where the last equation follows from the fact that
\begin{equation*}
\Delta^{(k)}\eta=\square \eta-\mathscr{R}^{(k)}(\eta).\qedhere
\end{equation*}
\end{proof}

By the usual abuse of notation, the  corresponding self-adjoint realizations will again
be denoted by the same symbol, i.e. $\Delta_{\mu}$ and $\Delta^{(k)}_{\mu}$
respectively. By local parabolic regularity, for all square-integrable $k$-forms $a \in \Omega^k_{L^2}$, the time-dependent $k$-form
$$
(0,\infty) \times M \ni(t, x) \mapsto \mathrm{e}^{-\Delta_{\mu}^{(k)}t} a \in \Lambda^{k} T_{x}^{*} M
$$
has a smooth representative which extends smoothly to $[0,\infty)\times M$ if $a$ is smooth.
In addition, there exists a unique smooth heat kernel of $\mathrm{e}^{-\Delta_{\mu}^{(k)}t}$ with respect to the measure $\mu$, which is understood as a map
\begin{align*}
(0,\infty)\times M \times M \ni (t,x,y)\mapsto \e^{-\Delta_{\mu}^{(k)}t}(x,y)\in {\rm Hom}(\Lambda^k T^*_yM, \Lambda^k T^*_xM)
\end{align*}
such that
\begin{align*}
\e^{-t\Delta_{\mu}^{(k)}}a(x)=\int_M \e^{-t\Delta_{\mu}^{(k)}}(x,y)a(y)\,\mu(\d y).
\end{align*}
Let
$$
-K:=\min \left\{\big((\mathscr{R}-\Hess h)^{(k)} v, v\big)_g\colon v \in \Lambda^{k} T_{x} M,\ |v|=1,\ x \in M\right\} .
$$
Recall the notation  $\lambda_k(x)$ defined as the smallest eigenvalue of $(\mathscr{R}-\Hess h)^{(k)}(x), x \in M$ and let
$$
V_k(x)=\lambda_k^{-}(x)=(|\lambda_k(x)|-\lambda_k(x)) / 2.
$$
Then by \cite{HSU80},
$$
\big|\exp \left(-t \Delta_{\mu}^{(k)}\right)(x, y)\big| \leq p^{V_k}_{t}(x, y).
$$
We conclude that to estimate $\big|\exp \left(-t \Delta_{\mu}^{(k)}\right)(x, y)\big|$,
it suffices to estimate the Schr\"{o}dinger heat kernel $p^{V_k}_{t}(x, y)$. There is 
 a lot of previous work dealing with
Schr\"{o}dinger heat kernels on manifolds, see for instance \cite{CZ07,Batu:2017,Sturm,Zhang00,Zhang01}.

\begin{theorem}
  \label{Gaussian-estimate}
  Let $M$ be a complete non-compact Riemannian manifold satisfying  \eqref{LD} and \eqref{Upper-bound}.   Then for any $\alpha \in (0,1 / 4)$, there exists $ \tilde{\sigma}>0$  depending only on   the constants in  \eqref{LD} and \eqref{Upper-bound} and a constant $C>0$ such that
$$
\big|\exp{(-t\Delta_{\mu}^{(k)})}(x, y)\big| \leq \frac{C\e^{\tilde{\sigma} t}}{\mu(B(y, \sqrt{t}))} \exp \left(-\alpha \rho(x, y)^{2} / t\right),\quad \forall x,y\in M, \, t>0.
$$
If $\sigma_1=0$ and $\sigma_2=0$, then $\tilde{\sigma}=0$.

\end{theorem}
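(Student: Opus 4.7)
The plan is to use the domination $|\e^{-t\Delta_{\mu}^{(k)}}(x,y)| \leq p_t^{V_k}(x,y)$ recalled just before the statement (Hess--Schrader--Uhlenbrock), so the problem reduces to proving a Gaussian off-diagonal upper bound for the scalar Schr\"odinger kernel $p_t^{V_k}$ from the assumptions \eqref{LD} and \eqref{Upper-bound}. I would follow the classical chain: on-diagonal bound $\Rightarrow$ symmetric off-diagonal bound via Cauchy--Schwarz $\Rightarrow$ Gaussian off-diagonal bound via Davies' exponential perturbation $\Rightarrow$ one-sided volume normalization via \eqref{LD}.

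Since $\Delta_{\mu}-V_k$ is self-adjoint on $L^2(\mu)$ with a non-negative symmetric integral kernel (Trotter product formula), the semigroup identity and Cauchy--Schwarz combined with \eqref{Upper-bound} give
\[
p_t^{V_k}(x,y) \leq \sqrt{p_t^{V_k}(x,x)\,p_t^{V_k}(y,y)} \leq \frac{C\,\e^{\sigma_2 t}}{\sqrt{\mu(B(x,\sqrt t))\,\mu(B(y,\sqrt t))}}.
\]
To insert the Gaussian decay, I would run Davies' exponential perturbation: for any bounded $1$-Lipschitz function $\xi$ on $M$ and $\beta>0$, define the twisted semigroup
\[
T_t^{\beta} f := \e^{-\beta\xi}\,\e^{-t(\Delta_{\mu}-V_k)}\big(\e^{\beta\xi} f\big).
\]
A standard quadratic-form computation, using $|\nabla\xi|\leq 1$ and the favourable sign of $V_k$, yields $\|T_t^{\beta}\|_{L^2(\mu)\to L^2(\mu)} \leq \e^{\beta^2 t}$. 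Applying the symmetric off-diagonal argument of the previous step to the kernel $\e^{-\beta\xi(x)}p_t^{V_k}(x,y)\e^{\beta\xi(y)}$ of $T_t^{\beta}$, choosing $\xi(\newdot)=\rho(x,\newdot)$, and optimizing in $\beta$ through $\inf_{\beta>0}(\beta^2 t - \beta\rho(x,y)) = -\rho(x,y)^2/(4t)$, one arrives at
\[
p_t^{V_k}(x,y) \leq \frac{C\,\e^{\sigma_2 t}}{\sqrt{\mu(B(x,\sqrt t))\,\mu(B(y,\sqrt t))}}\,\exp\!\Big(-\alpha\,\frac{\rho(x,y)^2}{t}\Big)
\]
for any $\alpha<1/4$.

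It remains to convert the symmetric volume factor to a one-sided one. From $B(x,\sqrt t)\subset B(y,\sqrt t + \rho(x,y))$ and \eqref{LD},
\[
\frac{\mu(B(y,\sqrt t))}{\mu(B(x,\sqrt t))} \leq C\Big(1+\frac{\rho(x,y)}{\sqrt t}\Big)^{\!m}\e^{\sigma_1 \rho(x,y)},
\]
so the symmetric factor is dominated by $C\,\mu(B(y,\sqrt t))^{-1}(1+\rho(x,y)/\sqrt t)^{m/2}\e^{\sigma_1\rho(x,y)/2}$. The polynomial factor is absorbed into the Gaussian by a slight reduction of $\alpha$; the linear factor $\e^{\sigma_1\rho(x,y)/2}$ is absorbed via the elementary inequality $\sigma_1\rho(x,y)/2 \leq \delta\rho(x,y)^2/t + \sigma_1^2 t/(16\delta)$, which contributes an extra $\e^{\sigma_1^2 t/(16\delta)}$. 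Collecting all time-exponential factors produces the claimed estimate with $\tilde\sigma$ depending only on the constants in \eqref{LD}, \eqref{Upper-bound} and $\alpha$, and one reads off that $\tilde\sigma=0$ when $\sigma_1=\sigma_2=0$.

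The main obstacle is the Davies step: one must check that the quadratic-form bound $\|T_t^{\beta}\|_{2\to 2}\leq \e^{\beta^2 t}$ is tight, so that the Gaussian constant genuinely approaches $1/4$ from below rather than some smaller value, and that no spurious $t$-exponential is produced in the borderline case $\sigma_1=\sigma_2=0$. Both points are settled by running Davies' computation with sharp constants (the $V_k$-term enters the quadratic form with the right sign and can be dropped) and by using the exact minimization in $\beta$ rather than any coarser majorization.
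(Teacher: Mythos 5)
Your general plan is right: dominate $|\e^{-t\Delta_{\mu}^{(k)}}(x,y)|$ by $p_t^{V_k}(x,y)$ and then show that \eqref{LD} and \eqref{Upper-bound} force a Gaussian upper bound for the scalar Schr\"odinger kernel, with the volume normalization fixed at the end. The paper does the middle step by invoking Grigor'yan's integrated maximum principle (references Gr97, CZ07), whereas you try Davies' exponential perturbation; that could in principle be a legitimate alternative, but your Davies computation contains a sign error that is fatal as written. The potential $V_k=\lambda_k^-\geq 0$ appears \emph{subtracted} in $\Delta_\mu-V_k$, so the quadratic form of the twisted operator $L_\beta=\e^{-\beta\xi}(\Delta_\mu-V_k)\e^{\beta\xi}$ is
\[
\Re\langle L_\beta f,f\rangle=\int_M|\nabla f|^2\,\d\mu-\beta^2\int_M|\nabla\xi|^2|f|^2\,\d\mu-\int_M V_k|f|^2\,\d\mu,
\]
and the $V_k$-term enters with the \emph{unfavourable} sign: it lowers the form and cannot be dropped, contrary to what you assert in the final paragraph. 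For unbounded $V_k$ the form is unbounded below and the claimed bound $\|T_t^\beta\|_{2\to 2}\leq\e^{\beta^2 t}$ is simply false. To run Davies' method here you would first need to extract from \eqref{Upper-bound} a spectral lower bound $\Delta_\mu-V_k\geq-c(\sigma_2)$ (that spectral shift is precisely what feeds the $\e^{\tilde\sigma t}$ factor in the statement), and you neither state nor use such a bound. A secondary, smaller gap: passing from the twisted $L^2\to L^2$ estimate to a pointwise Gaussian bound requires an on-diagonal (or $L^1\to L^\infty$) estimate for the \emph{twisted} kernel itself, and citing \eqref{Upper-bound} for the untwisted kernel in ``the previous step'' does not supply it. These are exactly the subtleties the Grigor'yan/Coulhon--Zhang route is designed to navigate cleanly for Schr\"odinger operators whose negative-part potential has this wrong sign.
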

\begin{proof}
Let $P_t^{V_k}$ be the semigroup generated by the operator $-\Delta_{\mu}+V_k$ and   $p_t^{V_k}(x,y)$ the corresponding heat kernel. 
We recall that
\begin{align*}
\left|\exp{(-t\Delta_{\mu}^{(k)})}(x, y)\right| \leq  p_t^{V_k}(x,y),\quad   x,y\in M,\  t>0.
\end{align*}
From the assumptions   \eqref{LD} and \eqref{Upper-bound}, one can derive that $p_t^{V_k}$ satisfies 
the Gaussian estimate:
\begin{align*}
p^{V_k}_t(x,y)\leq \frac{C\e^{\tilde{\sigma} t}}{\mu(B(y, \sqrt{t}))} \exp \left(-\alpha \rho(x, y)^{2} / t\right)
\end{align*}
for $x,y \in M$ and $t>0$, by the same argument as in \cite{Gr97}; see \cite[Theorem 3.1]{CZ07} for a similar argument.
\end{proof}

\begin{lemma}\label{lem3}
If the local volume doubling property \eqref{LD} holds, then for any $\gamma>0$, there exist positive constants $C_{\gamma}$ and $\tilde{c}:=\sigma_1^2/2\gamma$ such that
\begin{align}\label{eqn-rho}
\int_{\rho(x, y) \geq \sqrt{t}} \mathrm{e}^{-2 \gamma \frac{\rho^{2}(x, y)}{s}} \mu(\d x) \leq C_{\gamma}\, \mu\big(B(y, \sqrt{s})\big) \,\mathrm{e}^{-\gamma t / s} \mathrm{e}^{\tilde{c} s}
\end{align}
for $s,t>0$ and $x,y\in M$.
\end{lemma}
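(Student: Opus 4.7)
\noindent
The plan is first to peel off the factor $\e^{-\gamma t/s}$ by exploiting the constraint $\rho(x,y)\geq\sqrt{t}$ on the integration region, and then to reduce the remaining estimate to a Gaussian-type integral controlled by the local volume doubling property \eqref{LD}. On the integration domain one has $\rho^2(x,y)\geq t$, so
\[
-\frac{2\gamma\,\rho^2(x,y)}{s}\leq -\frac{\gamma\,\rho^2(x,y)}{s}-\frac{\gamma t}{s},
\]
which gives
\[
\int_{\rho(x,y)\geq\sqrt{t}}\e^{-2\gamma\rho^2(x,y)/s}\,\mu(\d x)\leq \e^{-\gamma t/s}\int_M \e^{-\gamma\rho^2(x,y)/s}\,\mu(\d x),
\]
so the proof is reduced to establishing the Gaussian bound
\[
I(s,y):=\int_M \e^{-\gamma\rho^2(x,y)/s}\,\mu(\d x)\leq C_\gamma\,\mu(B(y,\sqrt{s}))\,\e^{\tilde{c}s}.
\]

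For the latter I would use the layer-cake identity
\[
I(s,y)=\int_0^\infty \frac{2\gamma R}{s}\,\e^{-\gamma R^2/s}\,\mu(B(y,R))\,\d R,
\]
split at $R=\sqrt{s}$, and apply \eqref{LD} with base radius $\sqrt{s}$ and dilation $\alpha=R/\sqrt{s}$ on the tail $R\geq\sqrt{s}$, obtaining
\[
\mu(B(y,R))\leq C\,\mu(B(y,\sqrt{s}))\,(R/\sqrt{s})^m\,\e^{\sigma_1(R-\sqrt{s})}.
\]
The inner contribution $R\leq\sqrt{s}$ is controlled trivially by $(1-\e^{-\gamma})\,\mu(B(y,\sqrt{s}))$.

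On the tail, the key step is to complete the square in $R$ in the exponent,
\[
\sigma_1 R-\frac{\gamma R^2}{s}=-\frac{\gamma}{s}\Bigl(R-\frac{\sigma_1 s}{2\gamma}\Bigr)^2+\frac{\sigma_1^2 s}{4\gamma},
\]
which produces the factor $\e^{\sigma_1^2 s/(4\gamma)}$ together with a Gaussian in $R$ of width $\sqrt{s/\gamma}$ whose integral against the polynomial weight $R^{m+1}$ is at most polynomial in $s$. Absorbing this polynomial prefactor into the exponential via $s^a\leq C_{a,\delta}\,\e^{\delta s}$ with $\delta=\sigma_1^2/(4\gamma)$ upgrades the exponent to the stated value $\tilde{c}=\sigma_1^2/(2\gamma)$, at the cost of a constant $C_\gamma$ depending on $\gamma,\sigma_1,m$ and the constant in \eqref{LD}.

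The main technical obstacle is precisely this square-completion step: the volume growth factor $\e^{\sigma_1(R-\sqrt{s})}$ from \eqref{LD} must be balanced against the Gaussian decay $\e^{-\gamma R^2/s}$, and the residue $\sigma_1^2 s/(4\gamma)$ is what dictates the size of $\tilde{c}$; the factor of two slack between $\sigma_1^2/(4\gamma)$ and the claimed $\sigma_1^2/(2\gamma)$ is exactly what absorbs the polynomial corrections. An essentially equivalent route replaces the layer-cake formula by the dyadic annular decomposition $A_k=\{2^{k-1}\sqrt{s}\leq\rho(x,y)<2^k\sqrt{s}\}$ for $k\geq 1$, applying \eqref{LD} with $\alpha=2^k$ on each $A_k$ and summing $\sum_k 2^{km}\,\e^{\sigma_1 2^k\sqrt{s}-\gamma 4^{k-1}}$; the same square-completion argument applied to the sequence $\{2^k\}$ yields the same constants and recovers the claimed bound.
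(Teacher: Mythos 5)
Your proof is correct, and it takes a genuinely different route from the paper's. Both proofs first peel off the factor $\e^{-\gamma t/s}$ via $\rho^2(x,y)\geq t$ on the integration domain and then reduce the problem to a Gaussian-type bound on $\int_M \e^{-\gamma\rho^2(x,y)/s}\,\mu(\d x)$, but they attack that integral differently. You use the layer-cake representation $\int_0^\infty \frac{2\gamma R}{s}\e^{-\gamma R^2/s}\mu(B(y,R))\,\d R$ and then complete the square to balance the $\e^{\sigma_1 R}$ growth from \eqref{LD} against the $\e^{-\gamma R^2/s}$ decay, obtaining the exponent $\sigma_1^2 s/(4\gamma)$ directly and then spending a matching $\sigma_1^2 s/(4\gamma)$ to absorb the polynomial prefactor via $s^a\leq C_{a,\delta}\e^{\delta s}$, arriving at $\tilde c=\sigma_1^2/(2\gamma)$. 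The paper instead discretizes by linear (not dyadic) annuli $\{i\sqrt{s}\leq\rho(x,y)<(i+1)\sqrt{s}\}$, applies \eqref{LD} with $\alpha=i+1$ on each annulus, and invokes Young's inequality $\sigma_1 i\sqrt{s}\leq\tfrac{\gamma i^2}{2}+\tfrac{\sigma_1^2 s}{2\gamma}$; the residual series $\sum_i(i+1)^{m+1}\e^{-\gamma i^2/2}$ converges, so no separate polynomial-absorption step is needed. Both computations give the same $\tilde c$, but the paper's version is slightly more self-contained because the polynomial growth in $i$ and the exponential factor in $s$ are decoupled from the start, whereas yours couples them and separates only at the end. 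One point worth making explicit in your write-up: when $\sigma_1=0$ the absorption step $s^a\leq C\e^{\delta s}$ with $\delta=\sigma_1^2/(4\gamma)=0$ is unavailable; this is harmless because in that case the completed square is already centered at $R=0$ and there is no $s$-dependent polynomial prefactor to absorb, but it deserves a sentence, since the theorem's conclusion ``$\sigma_1=0$ implies $\tilde c=0$'' is used downstream. The Young's-inequality route avoids this degenerate corner by construction.
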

\begin{proof}
  By \eqref{eqn-GD}, it is easy to see that for all $\gamma>0$,
  $s,t>0$ and $y\in M$, there exist two positive constants
  $C_{\gamma}$ (depending on $\gamma$ and the constants in
  \eqref{eqn-GD}) and $\tilde{c}=\sigma_1^2/2\gamma$ such that
  \begin{align}\label{eqn-ele1}
    \int_{\rho(x,y)\geq \sqrt{t}}\e^{-2\gamma\frac{\rho^2(x,y)}{s}}\,\mu(\d x)
    &\leq \e^{-\gamma t/s}\int_M \e^{-\gamma \frac{\rho^2(x,y)}{s}}\,\mu(\d x) \notag\\
    &\leq  \e^{-\gamma t/s}\sum_{i=0}^{\infty}\mu(B(y,(i+1)\sqrt{s}))\e^{-\gamma i^2}\notag\\
    &\leq C  \e^{-\gamma t/s}\mu(B(y,\!\sqrt{s})) \sum_{i=0}^{\infty}
      (i+1)^{m+1} \e^{-\gamma i^2}\e^{\sigma_1i\sqrt{s}} \notag\\
    &\leq  C  \e^{-\gamma t/s}\mu(B(y,\!\sqrt{s})) \sum_{i=0}^{\infty}
      (i+1)^{m+1} \e^{-\gamma i^2}\e^{\gamma i^2/2+\sigma_1^2s/(2\gamma)}\notag\\
    & \leq  C  \e^{-\gamma t/s}\e^{\sigma_1^2s/(2\gamma)}\mu(B(y,\!\sqrt{s})) \sum_{i=0}^{\infty}
      (i+1)^{m+1} \e^{-\gamma i^2/2}\notag\\
    & \leq C_{\gamma} \mu(B(y,\!\sqrt{s})) \,\e^{-\gamma t/s} \e^{\tilde{c}s},
  \end{align}
  where the third inequality comes from condition \eqref{eqn-GD}. 
\end{proof}

By means of this estimate, we obtain immediately the following consequence.

\begin{theorem}\label{L2-estimate}
 Let $M$ be a complete non-compact Riemannian manifold satisfying  \eqref{LD} and \eqref{Upper-bound}. Then  for any $\alpha\in (0,1/4)$ and $\gamma \in (0, \alpha)$, there exists some constant $C>0$ such that 
$$
\int_M\left|\exp{(-t\Delta_{\mu}^{(k)})}(x, y)\right|^2 \e^{ \frac{2\gamma \rho^2(x,y)}{t}}\, \mu(\d x) \leq \frac{C\e^{2C_0t}}{\mu\big(B(y, \sqrt{t})\big)},
$$
for all $ y \in M$ and $t>0$, where  $C_0:=\tilde{\sigma}+ \frac{1}{2}\tilde{c}$ and the constants $\tilde{\sigma}, \tilde{c}$ defined in Theorems \ref{Gaussian-estimate} and \ref{lem3} respectively.  

\end{theorem}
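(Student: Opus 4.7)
The plan is to combine the pointwise Gaussian upper bound from Theorem~\ref{Gaussian-estimate} with the exponential-weighted volume estimate of Lemma~\ref{lem3} in a direct way. Squaring the pointwise bound from Theorem~\ref{Gaussian-estimate} gives
\[
\bigl|\exp{(-t\Delta_{\mu}^{(k)})}(x,y)\bigr|^{2}\le \frac{C^{2}\e^{2\tilde\sigma t}}{\mu(B(y,\sqrt t))^{2}}\,\exp\!\Bigl(-\frac{2\alpha\,\rho(x,y)^{2}}{t}\Bigr),
\]
so after multiplying by the weight $\e^{2\gamma\rho^{2}(x,y)/t}$ the exponent becomes $-2(\alpha-\gamma)\rho^{2}(x,y)/t$, which is genuinely negative since $\gamma<\alpha$. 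The task therefore reduces to controlling the spatial integral
\[
I(y,t):=\int_{M}\exp\!\Bigl(-\frac{2(\alpha-\gamma)\rho^{2}(x,y)}{t}\Bigr)\mu(\d x)
\]
by a constant multiple of $\mu(B(y,\sqrt t))\,\e^{\tilde c t}$.

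First I would split $I(y,t)$ into the contribution from the ball $\{\rho(x,y)<\sqrt t\}$ and from its complement. On the ball the weight is bounded by $1$, so this piece contributes at most $\mu(B(y,\sqrt t))$. On the complementary set $\{\rho(x,y)\ge\sqrt t\}$ I would apply Lemma~\ref{lem3} with parameter $\gamma$ replaced by $\alpha-\gamma>0$ and with $s=t$; this yields an upper bound of the form $C_{\alpha-\gamma}\,\mu(B(y,\sqrt t))\,\e^{-(\alpha-\gamma)}\e^{\tilde c t}$, with $\tilde c=\sigma_{1}^{2}/(2(\alpha-\gamma))$ as in the statement. Summing the two pieces gives
\[
I(y,t)\le C'\,\mu(B(y,\sqrt t))\,\e^{\tilde c t}.
\]

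Plugging this back into the squared Gaussian bound and cancelling one factor of $\mu(B(y,\sqrt t))$ produces
\[
\int_{M}\!\bigl|\exp{(-t\Delta_{\mu}^{(k)})}(x,y)\bigr|^{2}\e^{\frac{2\gamma\rho^{2}(x,y)}{t}}\mu(\d x)\le \frac{C\,\e^{(2\tilde\sigma+\tilde c)t}}{\mu(B(y,\sqrt t))},
\]
which is exactly the claimed inequality since $2C_{0}=2\tilde\sigma+\tilde c$. No step here is truly delicate: Theorem~\ref{Gaussian-estimate} supplies the Gaussian decay in space, while Lemma~\ref{lem3} provides precisely the right volume/exponent book-keeping to close the estimate. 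The only point requiring a bit of care is the choice of parameter in Lemma~\ref{lem3}: one must use $\alpha-\gamma$ (not $\gamma$) as the decay exponent, which is why the hypothesis $\gamma<\alpha$ (and in particular $\gamma<1/4$) is needed to keep $\tilde c$ finite and the exponential moments integrable.
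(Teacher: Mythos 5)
Your proof is correct and takes essentially the same route as the paper: square the pointwise Gaussian bound from Theorem~\ref{Gaussian-estimate}, absorb the weight $\e^{2\gamma\rho^{2}/t}$ into the Gaussian using $\gamma<\alpha$, and control the remaining spatial integral $\int_M \e^{-2(\alpha-\gamma)\rho^2/t}\,\mu(\d x)$ by $C\,\mu(B(y,\sqrt t))\,\e^{\tilde c t}$ via Lemma~\ref{lem3}. The only cosmetic difference is that you split into $\{\rho<\sqrt t\}$ and its complement before invoking Lemma~\ref{lem3}, whereas the paper obtains the estimate over all of $M$ in a single stroke (by observing that the proof of Lemma~\ref{lem3} already bounds the full integral over $M$); both lead to the same constant bookkeeping $2C_0=2\tilde\sigma+\tilde c$.
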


\begin{proof}
 Letting $t$ tend to $\infty$ in inequality \eqref{eqn-rho}, we obtain
$$
\int_{M} \mathrm{e}^{-2 \gamma \frac{\rho^{2}(x, y)}{t}} \mu(\d x) \leq C_{\gamma}\, \mu\Big(B\big(y, \sqrt{t}\big)\Big) \,\mathrm{e}^{\tilde{c} t}, \quad t>0 .
$$
By Theorem \ref{Gaussian-estimate} and Lemma \ref{lem3}, we conclude that there exists $\tilde{\sigma}>0$ depending only on the constants 
$\sigma_1$ and $\sigma_2$ and a constant $C>0$ such that 
\begin{align*}
&\int_M\left|\exp{(-t\Delta_{\mu}^{(k)})}(x, y)\right|^2 \e^{ \frac{2\gamma \rho^2(x,y)}{t}}\, \mu(\d x) \\
&\leq  C\frac{\e^{2\tilde{\sigma} t}}{\mu\big(B(y,\sqrt{t})\big)^2}\int_M \e^{ \frac{-(2\alpha- 2\gamma) \rho^2(x,y)}{t}}\, \mu(\d x) \\
&\leq  C\frac{\e^{(2\tilde{\sigma}+\tilde{c}) t}}{\mu(B\big(y,\sqrt{t})\big)}.
\end{align*}
We then complete the proof.
\end{proof}

\subsection{$L^2$-weighted derivative estimates of  heat kernel}
In this subsection, we start the discussion under the assumption that \eqref{LD} and \eqref{Upper-bound} hold and that the operator $\Delta_{\mu}-V_k+\sigma_3$ on $\Omega^k$ is strongly 
positive. Then we have the following result about the $L^2$-weighted derivative estimate of the heat kernel.

\begin{theorem}\label{Dkernel-Gaussian}
 Let $M$ be a complete non-compact Riemannian manifold satisfying the assumptions as in Theorem \ref{main-them1}. Fix $\alpha \in\left(0, {1}/{4}\right)$ as in Theorem \ref{Gaussian-estimate}.  Then for any $0<\gamma<\alpha$,  there exists a constant $C>0$  such that 
 $$
\int_{M} \left|\nabla \exp \left(-t \Delta_{\mu}^{(k)}\right)(x, y)\right|^{2} \mathrm{e}^{2 \gamma \frac{\rho^{2}(x, y)}{t}} \mu(\d x) \leq \frac{C(1+\sigma_3 t)\e^{2C_0 t}}{t\mu(B(y, \sqrt{t}))}
$$
for  all $y \in M$, $t>0$, where the constant $C_0$ is  defined as in Theorem \ref{L2-estimate} 
\end{theorem}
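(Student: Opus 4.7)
My plan is to combine a weighted energy identity for the heat semigroup on $k$-forms with the spectral smoothing of the self-adjoint semigroup and the Davies exponential perturbation used in the proof of Theorem \ref{L2-estimate}. First, for any smooth $k$-form $u$, Lipschitz function $\phi$ with $|\nabla\phi|\le 1$ and $\beta\ge 0$, I would derive the weighted algebraic identity
\begin{align*}
\int_M(\Delta_\mu^{(k)}u,u)_g\,e^{2\beta\phi}\,d\mu
&=\int_M|\nabla(e^{\beta\phi}u)|^2\,d\mu-\beta^2\int_M|e^{\beta\phi}u|^2|\nabla\phi|^2\,d\mu\\
&\quad+\int_M\big((\mathscr{R}^{(k)}-(\Hess h)^{(k)})u,u\big)_g\,e^{2\beta\phi}\,d\mu,
\end{align*}
which follows from the Weitzenb\"ock formula (Theorem \ref{pre-theorem1}) together with an integration by parts for the Bochner Laplacian. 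Bounding the curvature term pointwise below by $-V_k|u|^2$, applying Kato's inequality $|\nabla|u||^2\le|\nabla u|^2$ and the strong positivity hypothesis to the scalar $|e^{\beta\phi}u|$ yields the coercivity estimate
$$
\int_M(\Delta_\mu^{(k)}u,u)_g\,e^{2\beta\phi}\,d\mu\ge(1-A)\int_M|\nabla(e^{\beta\phi}u)|^2\,d\mu-(\beta^2+\sigma_3)\int_M|e^{\beta\phi}u|^2\,d\mu.
$$

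Setting $v_t:=e^{\beta\phi}e^{-t\Delta_\mu^{(k)}}a$, the identity $\partial_t\|v_t\|_{L^2}^2=-2\int(\Delta_\mu^{(k)}u_t,u_t)_g e^{2\beta\phi}\,d\mu$ combined with the coercivity gives the differential inequality
$$
\partial_t\|v_t\|^2+2(1-A)\|\nabla v_t\|^2\le 2(\beta^2+\sigma_3)\|v_t\|^2,
$$
from which Gronwall produces $\|v_t\|^2\le e^{2(\beta^2+\sigma_3)t}\|v_0\|^2$ and, after time integration, an integrated gradient bound $\int_0^t\|\nabla v_s\|^2\,ds\lesssim e^{2(\beta^2+\sigma_3)t}\|v_0\|^2$. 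To upgrade this to a pointwise-in-$t$ bound with the sharp $1/t$ smoothing factor, I would use the splitting $e^{-t\Delta_\mu^{(k)}}=e^{-(t/2)\Delta_\mu^{(k)}}\circ e^{-(t/2)\Delta_\mu^{(k)}}$: the spectral inequality $\|(\Delta_\mu^{(k)})^{1/2}e^{-(t/2)\Delta_\mu^{(k)}}\|_{L^2(\mu)\to L^2(\mu)}^2\le (et)^{-1}$ applied together with the $\beta=0$ version of the coercivity extracts the $1/t$ gain from the self-adjoint outer factor, while the weighted Gronwall estimate above transports the Davies weight through the inner factor. The resulting weighted $L^2$--$L^2$ gradient inequality reads
$$
\int_M|\nabla e^{-t\Delta_\mu^{(k)}}a|^2\,e^{2\beta\phi}\,d\mu\le\frac{C(1+\sigma_3 t)\,e^{2(\beta^2+\sigma_3)t}}{t}\int_M|a|^2\,e^{2\beta\phi}\,d\mu.
$$

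Finally, the kernel estimate follows by specializing $a$: take $a=\exp(-(t/2)\Delta_\mu^{(k)})(\cdot,y)\xi$ for $\xi$ running over an orthonormal basis of $\Lambda^kT_y^*M$, so that the gradient integral on the left aggregates into $\int|\nabla_x\exp(-t\Delta_\mu^{(k)})(x,y)|^2 e^{2\beta\phi(x)}d\mu(x)$, while the right-hand side is controlled by Theorem \ref{L2-estimate} via $\int|\exp(-(t/2)\Delta_\mu^{(k)})(\cdot,y)\xi|^2 e^{2\beta\phi}\,d\mu\lesssim e^{O(t)}/\mu(B(y,\sqrt t))$. Choosing $\phi(\cdot)=\rho(\cdot,y)$ and optimizing $\beta\sim\gamma\rho(x,y)/t$ in the Davies--Grigor'yan manner converts the linear weight $e^{2\beta\rho(x,y)}$ into the Gaussian weight $e^{2\gamma\rho^2(x,y)/t}$ for any $\gamma<1/4$, the residual exponential factors being absorbed into $e^{2C_0t}$. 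I anticipate the main technical obstacle to be the $1/t$-smoothing step: because the Davies-conjugated generator $\tilde L=e^{\beta\phi}\Delta_\mu^{(k)}e^{-\beta\phi}$ is not self-adjoint on $L^2(\mu)$, spectral calculus cannot be invoked directly on $e^{-t\tilde L}$, and the semigroup splitting is the device that confines the smoothing step to the genuinely self-adjoint half of the factored semigroup.
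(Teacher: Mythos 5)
Your overall skeleton matches the paper's: both use the Weitzenb\"ock formula plus the strong positivity hypothesis and Kato's inequality to obtain the coercivity estimate, both extract a $1/t$ smoothing factor by exploiting analyticity of the semigroup, and both close the argument by specializing to the heat kernel and invoking Theorem \ref{L2-estimate}. What you do differently, and legitimately so, is to work with the Davies linear weight $e^{2\beta\phi}$ and optimize $\beta$ at the end to produce the Gaussian weight, whereas the paper works directly with the Gaussian weight $e^{2\gamma\rho^2(\cdot,y)/t}$ multiplied by a radial cutoff $\psi(\rho(\cdot,y))$ (so that the integration by parts on the noncompact $M$ is justified, letting the cutoff radius $R\to\infty$ only at the very end). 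The linear-weight route is cleaner algebraically --- for instance your coercivity inequality $(1-A)\int|\nabla(e^{\beta\phi}u)|^2\le\int(\Delta_\mu^{(k)}u,u)_g e^{2\beta\phi}+(\beta^2+\sigma_3)\int|e^{\beta\phi}u|^2$ correctly keeps the $\beta^2$ cross-term, and the final Davies--to--Gaussian conversion by dyadic annuli indeed lands on $\gamma<1/4$ --- but you should make explicit the a priori integrability issue (the weight is unbounded) that the paper's cutoff handles.

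The genuine gap is your $1/t$ smoothing step. Writing $v_t=e^{\beta\phi}e^{-t\Delta_\mu^{(k)}}a$, the weighted Gronwall estimate gives $\|v_t\|^2\le e^{2(\beta^2+\sigma_3)t}\|v_0\|^2$ and, after integrating the differential inequality, only a time-averaged gradient bound $\int_{t/2}^t\|\nabla v_s\|^2\,ds\lesssim e^{O(t)}\|v_0\|^2$. Your proposed upgrade to a pointwise-in-$t$ bound via $e^{-t\Delta_\mu^{(k)}}=e^{-(t/2)\Delta_\mu^{(k)}}\circ e^{-(t/2)\Delta_\mu^{(k)}}$ does not close: you want the final integral $\int|\nabla e^{-t\Delta_\mu^{(k)}}a|^2 e^{2\beta\phi}$ to carry \emph{both} the gradient and the weight, yet you propose to extract the $1/t$ factor from the unweighted ($\beta=0$) spectral inequality on the outer factor and the weight from the Gronwall estimate on the inner factor. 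These do not compose: the output of the outer factor --- the quantity whose gradient you need --- has to be measured in the weighted space, which is precisely what you are trying to bound. Nor can you pass from the time-averaged bound to the pointwise one by monotonicity, since $s\mapsto\|\nabla v_s\|^2$ is not monotone for the Davies-conjugated (non-self-adjoint, non-normal) semigroup, as you yourself note at the end. The paper circumvents this by applying Cauchy's integral formula to the analytic function $z\mapsto\langle e^{-z\Delta_\mu^{(k)}}a_1, w\,a_2\rangle$ on the circle $|z-t|=t/2$, which yields $|\langle\Delta_\mu^{(k)} e^{-t\Delta_\mu^{(k)}}a_1, w\,a_2\rangle|\le\frac{C}{t}\|a_1\sqrt w\|_2\|a_2\sqrt w\|_2$ in one stroke --- this is the standard device for analytic-smoothing of a sectorial, non-self-adjoint conjugated generator, and some such contour-integral or resolvent argument is unavoidable here. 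Replacing your splitting step with this Cauchy-formula estimate would complete your proof along lines otherwise essentially equivalent to the paper's.
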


\begin{proof} 
For $R>0$, let $\psi$ be a $C^2$ function on $\R^+$ such that $\psi(r)=1$ for $r\in [0,R]$, $\psi(r)=0$ for $r>2R$ and $\|\psi\|_{\infty}\leq 1,\ \ \|\psi'\|_{\infty}\leq \frac{c\psi^{1/2}}{R}$ for some positive constant $c>0$ (see \cite{LY86}). An argument of Calabi, which is also used in
\cite{Cheng-Yau}, allows us  to assume without loss of generality that $\mathrm{e}^{\frac{2 \gamma \rho^2(\cdot,y)}{t}}\psi(\rho(\cdot,y))$ for $y\in M$ is smooth. 
According to the integration by parts formula, we have 
\begin{align*}
&\int_{M}\left|\nabla \exp \left(-t \Delta_{\mu}^{(k)}\right)(x, y)\right|^{2} \mathrm{e}^{\frac{2 \gamma \rho^2(x,y)}{t}} \psi(\rho(x,y)) \mu(\d x) \\
&=\int_{M}\left(\psi(\rho(x,y)) 4 \gamma \frac{\rho(x, y)}{t} + \psi'(\rho(x, y))\right)\left(\nabla_{\nabla \rho} \exp \left(-t \Delta_{\mu}^{(k)}\right)(x, y),  \exp \left(-t \Delta_{\mu}^{(k)}\right)(x, y)\right)_g \mathrm{e}^{ \frac{2\gamma \rho^{2}(x, y)}{t}} \mu(\d x) \\
&\quad +\int_{M}\psi(\rho(x,y)) \left(\nabla^{*}_{\mu} \nabla \exp \left(-t \Delta_{\mu}^{(k)}\right)(x, y), \exp \left(-t \Delta_{\mu}^{(k)}\right)(x, y)\right)_g \mathrm{e}^{\frac{2 \gamma \rho^{2}(x, y)}{t}} \mu(\d x) =:{\rm I}+{\rm II}.
\end{align*}
Then there exists $\alpha>\gamma^{\prime}>\gamma>0$ such that
\begin{align*}
&{\rm I}=\int_{M} 4 \gamma \frac{\rho(x, y)}{t}\left(\nabla_{\nabla \rho} \exp \left(-t \Delta_{\mu}^{(k)}\right)(x, y), \exp \left(-t \Delta_{\mu}^{(k)}\right)(x, y)\right)_g \mathrm{e}^{ \frac{2\gamma \rho^{2}(x, y)}{t}}\psi(\rho(x,y))\, \mu(\d x)\\
&\quad +\int_{M} \psi'(\rho(x, y))\left(\nabla_{\nabla \rho} \exp \left(-t \Delta_{\mu}^{(k)}\right)(x, y),  \exp \left(-t \Delta_{\mu}^{(k)}\right)(x, y)\right)_g \mathrm{e}^{ \frac{2\gamma \rho^{2}(x, y)}{t}} \mu(\d x) \\
&=\int_{M} 4 \gamma \frac{\rho(x, y)}{t}\left(\nabla \exp \left(-t \Delta_{\mu}^{(k)}\right)(x, y), \left({\rm d}  \rho \otimes \exp \left(-t \Delta_{\mu}^{(k)}\right)\right)(x, y)\right)_g \mathrm{e}^{\frac{2 \gamma \rho^{2}(x, y)}{t}}\psi(\rho(x,y))\, \mu(\d x) \\
&\quad +\frac{c}{R} \int_{M}\left|\left(\nabla \exp \left(-t \Delta_{\mu}^{(k)}\right)(x, y), \left({\rm d}  \rho \otimes \exp \left(-t \Delta_{\mu}^{(k)}\right)\right)(x, y)\right)_g \right| \mathrm{e}^{\frac{2 \gamma \rho^{2}(x, y)}{t}}\psi(\rho(x,y))^{1/2}\, \mu(\d x) \\
&\leq \frac{C}{\sqrt{t}} \int_{M}\left|\nabla \exp \left(-t \Delta_{\mu}^{(k)}\right)(x, y)\right| \cdot\left|\exp \left(-t \Delta_{\mu}^{(k)}\right)(x, y)\right| \mathrm{e}^{\frac{2 \gamma^{\prime} \rho^{2}(x, y)}{t}}\psi(\rho(x,y))\, \mu(\d  x) \\
&\quad +\frac{c}{R} \int_{M}\left|\nabla \exp \left(-t \Delta_{\mu}^{(k)}\right)(x, y)\right| \cdot\left|\exp \left(-t \Delta_{\mu}^{(k)}\right)(x, y)\right| \mathrm{e}^{\frac{2 \gamma \rho^{2}(x, y)}{t}}\psi(\rho(x,y))^{1/2}\, \mu(\d x) \\
&\leq \frac{C}{\sqrt{t}}\left(\int_{M}\left|\nabla \exp \left(-t \Delta_{\mu}^{(k)}\right)(x, y)\right|^{2} \mathrm{e}^{\frac{2 \gamma \rho^{2}(x, y)}{t}}\psi(\rho(x,y))\, \mu(\d  x)\right)^{1 / 2}\\
&\qquad \quad \times \left(\int_{M}\left|\exp \left(-t \Delta_{\mu}^{(k)}\right)(x, y)\right|^{2} \mathrm{e}^{\frac{\left(4 \gamma^{\prime}-2 \gamma\right) \rho^{2}(x, y)}{t}}\psi(\rho(x,y))\, \mu(\d  x)\right)^{1 / 2} \\
&\quad +\frac{c}{R}\left(\int_{M}\left|\nabla \exp \left(-t \Delta_{\mu}^{(k)}\right)(x, y)\right|^{2} \mathrm{e}^{\frac{2 \gamma \rho^{2}(x, y)}{t}}\psi(\rho(x,y))\, \mu(\d  x)\right)^{1 / 2}\\
&\qquad \quad \times \left(\int_{M}\left|\exp \left(-t \Delta_{\mu}^{(k)}\right)(x, y)\right|^{2} \mathrm{e}^{\frac{2 \gamma \rho^{2}(x, y)}{t}}\, \mu(\d  x)\right)^{1 / 2} \\
&\leq \frac{1-A}{2}\int_{M}\left|\nabla \exp \left(-t \Delta_{\mu}^{(k)}\right)(x, y)\right|^{2} \mathrm{e}^{\frac{2 \gamma \rho^{2}(x, y)}{t}}\psi(\rho(x,y))\, \mu(\d x)\\
&\quad +\frac{C^{2}}{(1-A) t}\int_{M}\left|\exp \left(-t \Delta_{\mu}^{(k)}\right)(x, y)\right|^{2} \mathrm{e}^{\left(4 \gamma^{\prime}-2 \gamma\right)\frac{ \rho^{2}(x, y)}{t}}\psi(\rho(x,y))\, \mu(\d x)\\
& \quad +\frac{c^{2}}{(1-A) R^2}\int_{M}\left|\exp \left(-t \Delta_{\mu}^{(k)}\right)(x, y)\right|^{2} \mathrm{e}^{2 \gamma \frac{ \rho^{2}(x, y)}{t}}\, \mu(\d x)
\end{align*}
where $({\rm d} \rho)(x,y):=( {\rm d} \rho (\cdot, y))(x)$ and  $A<1$ is the constant from the strong positivity property of $\Delta_{\mu}-V_k+\sigma_3$.
Since $2\gamma'-\gamma<\alpha$, we can use the estimate in Theorem \ref{L2-estimate} to get
\begin{align*}
&\int_{M} 4 \gamma \frac{\rho(x, y)}{t}\left (\nabla \exp \left(-t \Delta_{\mu}^{(k)}\right)(x, y), \, \left(  \d  \rho \otimes \exp \left(-t \Delta_{\mu}^{(k)}\right)\right)(x, y)\right)_g\mathrm{e}^{\frac{2 \gamma \rho^{2}(x, y)}{t}}\psi(\rho(x,y))\, \mu(\d x) \\
&\leq \frac{1-A}{2}\int_{M}\left|\nabla \exp \left(-t \Delta_{\mu}^{(k)}\right)(x, y)\right|^{2} \mathrm{e}^{\frac{2 \gamma \rho^{2}(x, y)}{t}}\psi(\rho(x,y))\, \mu(\d  x)+\lf(\frac{1}{t}+\frac{1}{R^2}\r)
\frac{C\e^{2C_0 t}}{\mu(B(y,\sqrt{t}))} 
\end{align*}
for some generic constant $C$. As
$$
\Delta_{\mu}^{(k)}=\Box-\nabla_{\nabla h}+\mathscr{R}^{(k)}-(\operatorname{Hess}\, h)^{(k)},
$$
and $(\mathscr{R}^{(k)}-(\operatorname{Hess}\, h)^{(k)})(x) \geq -V_k(x)$, we then have 
\begin{align*}
{\rm II}=& \int_{M}\left(\nabla^{*}_{\mu} \nabla \exp \left(-t \Delta_{\mu}^{(k)}\right)(x, y), \exp \left(-t \Delta_{\mu}^{(k)}\right)(x, y)\right)_g \mathrm{e}^{\frac{2 \gamma \rho^{2}(x, y)}{t}}\psi(\rho(x,y))\, \mu(\d x) \\
=&\int_{M}\left(\Delta_{\mu}^{(k)} \exp \left(-t \Delta_{\mu}^{(k)}\right)(x, y), \exp \left(-t \Delta_{\mu}^{(k)}\right)(x, y)\right)_g \mathrm{e}^{\frac{2 \gamma \rho^{2}(x, y)}{t}}\psi(\rho(x,y))\, \mu(\d  x) \\
&-\int_{M}\left(\left(\mathscr{R}^{(k)}-(\mathrm{Hess}\, h)^{(k)}\right) \exp \left(-t \Delta_{\mu}^{(k)}\right)(x, y), \exp \left(-t \Delta_{\mu}^{(k)}\right)(x, y)\right)_g \mathrm{e}^{2 \gamma \frac{\rho^2(x,y)}{t}}\psi(\rho(x,y))\, \mu(\d  x) \\
\leq &\int_{M}\left(\Delta_{\mu}^{(k)} \exp \left(-t \Delta_{\mu}^{(k)}\right)(x, y), \exp \left(-t \Delta_{\mu}^{(k)}\right)(x, y)\right)_g \mathrm{e}^{\frac{2 \gamma \rho^{2}(x, y)}{t}}\psi(\rho(x,y))\, \mu(\d  x) \\
&+ \int_{M}(V_k(x)-\sigma_3)\left|\exp \left(-t \Delta_{\mu}^{(k)}\right)(x, y)\right|^{2} \mathrm{e}^{\frac{2 \gamma \rho^{2}(x, y)}{t}}\psi(\rho(x,y))\, \mu(\d  x)\\
&+\sigma_3 \int_{M}\left|\exp \left(-t \Delta_{\mu}^{(k)}\right)(x, y)\right|^{2} \mathrm{e}^{\frac{2 \gamma \rho^{2}(x, y)}{t}}\psi(\rho(x,y))\, \mu(\d x)\\
\leq &\int_{M}\left(\Delta_{\mu}^{(k)} \exp \left(-t \Delta_{\mu}^{(k)}\right)(x, y), \exp \left(-t \Delta_{\mu}^{(k)}\right)(x, y)\right)_g \mathrm{e}^{\frac{2 \gamma \rho^{2}(x, y)}{t}}\psi(\rho(x,y))\, \mu(\d x) \\
&+A \int_{M}\left|{\rm d} | \exp \left(-t \Delta_{\mu}^{(k)}\right)(x, y)| \right|^{2} \mathrm{e}^{\frac{2 \gamma \rho^{2}(x, y)}{t}}\psi(\rho(x,y))\, \mu(\d  x) \\
&
+\sigma_3 \int_{M}\left|\exp \left(-t \Delta_{\mu}^{(k)}\right)(x, y)\right|^{2} \mathrm{e}^{\frac{2 \gamma \rho^{2}(x, y)}{t}}\psi(\rho(x,y))\, \mu(\d  x).
\end{align*}
Using Kato's inequality we further obtain
\begin{align*}
{\rm II}\leq  &
\int_{M}\left(\Delta_{\mu}^{(k)} \exp \left(-t \Delta_{\mu}^{(k)}\right)(x, y), \exp \left(-t \Delta_{\mu}^{(k)}\right)(x, y)\right)_g \mathrm{e}^{\frac{2 \gamma \rho^{2}(x, y)}{t}}\psi(\rho(x,y))\, \mu(\d  x) \\
&+A \int_{M}\left|\nabla \exp \left(-t \Delta_{\mu}^{(k)}\right)(x,y)\right|^{2} \mathrm{e}^{\frac{2 \gamma \rho^{2}(x, y)}{t}}\psi(\rho(x,y))\, \mu(\d x)\\
&
+\sigma_3 \int_{M}\left|\exp \left(-t \Delta_{\mu}^{(k)}\right)(x,y)\right|^{2} \mathrm{e}^{\frac{2 \gamma \rho^{2}(x, y)}{t}} \mu(\d x).
\end{align*}
By 
Cauchy's integral formula, we get for $w\in \mathscr{B}(M)$ and $a_1, a_2 \in \Omega _{L^2}^k $, 
\begin{align*}
\Big\langle {\Delta}^{(k)}_{\mu} \mathrm{e}^{-t {\Delta}_{\mu}^{(k)}} a_{1},\,  w a_{2}\Big\rangle&=\Bigg|\int_{z:\,|z-t|=t / 2} \frac{\Big\langle\mathrm{e}^{-z {\Delta_{\mu}^{(k)}}} a_{1}, w a_{2}\Big\rangle}{(z-t)^{2}}\,\d z\Bigg| \leq(2 \pi)^{-1} \pi t \sup _{z\,:|z-t|=t / 2}\Bigg|\frac{\Big\langle\mathrm{e}^{-z {\Delta_{\mu}^{(k)}}}a_{1}, w a_{2}\Big\rangle}{(z-t)^{2}}\Bigg| \\
& \leq \frac{t}{2} \sup _{z:\,|z-t|=t/2}\left\|\,|\mathrm{e}^{-z {\Delta_{\mu}^{(k)}}}a_{1}|\, \sqrt{w}\,\right\|_{2}\left\|\,|a_{2}| \, \sqrt{w}\,\right\|_{2}(t / 2)^{-2}\\
&\leq \frac{2}{t} \left\|\,|a_{1}|\, \sqrt{w}\,\right\|_{2}\left\|\,|a_{2}| \, \sqrt{w}\,\right\|_{2},
\end{align*}
which implies  for $w(\newdot)=\mathrm{e}^{\frac{2 \gamma \rho^{2}(\newdot, y)}{t}}\psi(\rho(\cdot,y))$,
$$
\Big|\big\langle\Delta_{\mu}^{(k)} \mathrm{e}^{-t \Delta_{\mu}^{(k)} / 2} a_{1}, a_{2} \mathrm{e}^{\frac{2 \gamma \rho^{2}(\newdot, y)}{t}}\big\rangle\Big| \leq \frac{C}{t}\Big\|a_{1} \mathrm{e}^{\frac{\gamma \rho^{2}(\newdot, y)}{t}}\Big\|_{2}\Big\|a_{2} \mathrm{e}^{\frac{\gamma \rho^{2}(\newdot, y)}{t}}\Big\|_{2}.
$$
Letting $a_{1}(x)=\exp \left(-t\Delta_{\mu}^{(k)}/ 2\right)(x,y)$ and $a_{2}(x)=\mathrm{e}^{-t \Delta_{\mu}^{(k)}}(x, y)$, we then obtain by  Theorem \ref{L2-estimate}, 
\begin{align*}
&\left| -\int_{M}\left(\Delta_{\mu}^{(k)} \exp \left(-t \Delta_{\mu}^{(k)}\right)(x, y), \exp \left(-t \Delta_{\mu}^{(k)}\right)(x, y)\right)_g \mathrm{e}^{\frac{2 \gamma \rho^{2}(x, y)}{t}}\psi(\rho(x,y))\, \mu(\d x) \right| \\
&\quad\leq  \frac{C}{t}\left(\int_{M}\left|\exp \left(-t \Delta_{\mu}^{(k)}/ 2 \right)(x, y)\right|^{2} \mathrm{e}^{\frac{2 \gamma \rho^{2}(x, y)}{t}} \mu(\d x)\right)^{1 / 2} \\
&\qquad \times\left(\int_{M}\left|\exp \left(-t \Delta_{\mu}^{(k)}\right)(x, y)\right|^{2} \mathrm{e}^{\frac{2 \gamma \rho^{2}(x, y)}{t}} \mu(\d x)\right)^{1/2} \\
&\quad\leq \frac{C}{t}\left(\frac{C_1\e^{C_0 t}}{\mu(B(y, \sqrt{t/2}))}\right)^{1 / 2}\left(\frac{C_2 \e^{2C_0 t}}{\mu(B(y, \sqrt{t}))}\right)^{1 / 2} \\
&\quad\leq  \frac{C\e^{3/2 C_0 t}}{t\mu(B(y, \sqrt{t}))}.
\end{align*}
We conclude that
\begin{align*}
&\int_{B(y,R)}\left|\nabla \exp \left(-t \Delta_{\mu}^{(k)}\right)(x, y)\right|^{2} \mathrm{e}^{\frac{2 \gamma \rho^{2}(x, y)}{t}}\, \mu(\d x)\\
& \leq \int_{M}\left|\nabla \exp \left(-t \Delta_{\mu}^{(k)}\right)(x, y)\right|^{2} \mathrm{e}^{\frac{2 \gamma \rho^{2}(x, y)}{t}}\psi(\rho(x,y))\, \mu(\d x) \\
& \leq \frac{C(1+\sigma_3 t)\e^{2C_0 t}}{t \mu(B(y, \sqrt{t}))}+ \frac{C\e^{2C_0 t}}{R^2\mu(B(y,\sqrt{t}))}.
\end{align*}
 We then complete the proof by letting $R$ tend to $\infty$.\qedhere
\end{proof}

Combining Theorem \ref{Dkernel-Gaussian} with Lemma \ref{lem3}, we obtain
\begin{theorem}\label{cor-1}
 Let $M$ be a complete non-compact Riemannian manifold satisfying the same assumptions as in Theorem \ref{main-them1}.  Fix $\alpha \in\left(0, 1/4\right)$ as in Theorem \ref{Gaussian-estimate} and let $0<\gamma<\alpha$. There exists a constant $C>0$  such that
$$
\int_{\rho(x, y) \geq t^{1 / 2}}\left|\nabla \exp \left (-s \Delta_{\mu}^{(k)}\right)(x, y)\right| \mu(\d x) \leq C(1+\sqrt{\sigma_3 s}) \,\mathrm{e}^{2C_0 s-\gamma t / 2s} s^{-1 / 2}
$$
for all $y \in M$ and $s, t>0$, where $C_0$ is the same as in Theorem \ref{L2-estimate}.
\end{theorem}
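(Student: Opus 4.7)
The plan is to derive this $L^1$ off-diagonal estimate from the two ingredients already in hand: the $L^2$-weighted derivative bound of Theorem~\ref{Dkernel-Gaussian} and the Gaussian tail estimate of Lemma~\ref{lem3}. The key trick is to manufacture a weight $\mathrm{e}^{\gamma \rho^2(x,y)/s}$ on the gradient (matching the weight available from Theorem~\ref{Dkernel-Gaussian}) and pay for it with its reciprocal on the complementary factor, then split by Cauchy--Schwarz.

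Concretely, I would write
\begin{align*}
\int_{\rho(x,y)\geq \sqrt{t}}\bigl|\nabla\mathrm{e}^{-s\Delta_\mu^{(k)}}(x,y)\bigr|\,\mu(\d x)
&=\int_{\rho(x,y)\geq\sqrt t}\bigl|\nabla\mathrm{e}^{-s\Delta_\mu^{(k)}}(x,y)\bigr|\mathrm{e}^{\gamma\rho^2(x,y)/s}\cdot\mathrm{e}^{-\gamma\rho^2(x,y)/s}\,\mu(\d x)\\
&\leq \biggl(\int_M\bigl|\nabla\mathrm{e}^{-s\Delta_\mu^{(k)}}(x,y)\bigr|^2\mathrm{e}^{2\gamma\rho^2(x,y)/s}\,\mu(\d x)\biggr)^{1/2}\\
&\qquad\times\biggl(\int_{\rho(x,y)\geq\sqrt t}\mathrm{e}^{-2\gamma\rho^2(x,y)/s}\,\mu(\d x)\biggr)^{1/2}.
\end{align*}
The first factor is bounded by $\bigl(C(1+\sigma_3 s)\mathrm{e}^{2C_0 s}/(s\,\mu(B(y,\sqrt s)))\bigr)^{1/2}$ thanks to Theorem~\ref{Dkernel-Gaussian}. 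The second factor is bounded by $\bigl(C_\gamma\,\mu(B(y,\sqrt s))\,\mathrm{e}^{-\gamma t/s}\mathrm{e}^{\tilde c s}\bigr)^{1/2}$ thanks to Lemma~\ref{lem3} (the hypotheses \eqref{LD} and \eqref{Upper-bound} required there are assumed in the present theorem).

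Multiplying the two bounds, the factors $\mu(B(y,\sqrt s))$ cancel exactly, and collecting the exponentials yields the prefactor $\mathrm{e}^{(C_0+\tilde c/2)s-\gamma t/(2s)}s^{-1/2}$ together with $\sqrt{1+\sigma_3 s}\leq 1+\sqrt{\sigma_3 s}$. Since $C_0=\tilde\sigma+\tilde c/2\geq\tilde c/2$ (as $\tilde\sigma\geq 0$), the exponent satisfies $C_0+\tilde c/2\leq 2C_0$, so absorbing constants one obtains the stated bound $C(1+\sqrt{\sigma_3 s})\mathrm{e}^{2C_0 s-\gamma t/(2s)}s^{-1/2}$.

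There is no genuine obstacle here: the proof is essentially bookkeeping once Cauchy--Schwarz is set up with the right weight. The only minor care needed is to ensure the exponent $2\gamma<2\alpha$ used in Theorem~\ref{Dkernel-Gaussian} matches the decay rate $2\gamma$ imposed in Lemma~\ref{lem3}, which is automatic from the hypothesis $0<\gamma<\alpha<1/4$, and to verify that the auxiliary constant $\tilde c=\sigma_1^2/(2\gamma)$ from Lemma~\ref{lem3} is harmlessly absorbed in $2C_0$.
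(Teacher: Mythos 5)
Your proof is correct and follows essentially the same route as the paper: split by Cauchy--Schwarz with the weight $\mathrm{e}^{\gamma\rho^2(x,y)/s}$, bound the first factor by Theorem~\ref{Dkernel-Gaussian} and the second by Lemma~\ref{lem3}, and observe that $C_0+\tilde c/2\leq 2C_0$ since $\tilde\sigma\geq 0$. Nothing to add.
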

\begin{proof} Let $0<\gamma<\alpha$. By Cauchy's inequality we obtain
$$
\begin{aligned}
&\int_{\rho(x, y) \geq t^{1 / 2}}\left|\nabla \exp \left\{-s \Delta_{\mu}^{(k)}\right\}(x, y)\right| \mu(\d x) \\
&\leq\left(\int_{M}\left|\nabla \exp \left\{-s \Delta_{\mu}^{(k)}\right\}(x, y)\right|^{2} \mathrm{e}^{2 \gamma \rho^{2}(x, y) / s} \mu(\d x)\right)^{1 / 2}\left(\int_{\rho(x, y) \geq t^{1 / 2}} \mathrm{e}^{-2 \gamma \rho^{2}(x, y) / s} \mu(\d x)\right)^{1 / 2} \\
&\leq  C \e^{C_0 s} \sqrt{\frac{ 1+\sigma_3 s }{s \mu(B(y, \sqrt{s}))}} \sqrt{\mu(B(y, \sqrt{s}))} \,\mathrm{e}^{-{\gamma t}/{2s}}\,\mathrm{e}^{\tilde c s/2} \\
&=C \e^{2C_0 s}\frac{\sqrt{1+\sigma_3 s } }{\sqrt{s}}\,\mathrm{e}^{-\gamma t / 2s},
\end{aligned}
$$
where the second inequality follows from Theorem \ref{Dkernel-Gaussian} and Lemma \ref{lem3}. This finishes the proof.
\end{proof}

\section{Proof of Theorem \ref{main-them1}}\label{Sect3}
Let us now present the main steps of the proof of Theorem \ref{main-them1}
and Theorem \ref{them2},
following closely the approach of \cite[Theorems 1.1 \ and  1.2]{TX-99}.
Some of the arguments have been used already in \cite{CCT} and can be taken from there.
For the convenience of the reader and for the sake of completeness
we give details here.\smallskip

The object of our interest is for  suitable $\sigma \geq 0$ the following operator on $\Omega^k$:
\begin{align}\label{eqn-defT}
T_{\sigma}^{(k)}:=\nabla\left(\Delta_{\mu}^{(k)}+\sigma\right)^{-1 / 2}=\frac{1}{\sqrt{\pi}}  \int_{0}^{\infty}\nabla \exp \left(-\Delta_{\mu}^{(k)} s\right) \frac{\e^{-\sigma s}}{\sqrt{s}}\, \d s.
\end{align}
We ignore the normalization constant $1/{\sqrt{\pi}}$ in the sequel which is irrelevant for our purpose. 
We start with the boundedness of $T_\sigma^{(k)}$ in $L^2$.

\begin{lemma}\label{lem-L2}
For $k\in \N^+$, 
suppose  $\Delta_{\mu}-V_k+\sigma$ is strongly positive on $C_c^{\infty}(M)$ for some constant $\sigma>0$. Then 
the operator $\nabla\left({\Delta}^{(k)}_{\mu}+\sigma\right)^{-1 / 2}$ on $\Omega_c^k$ is bounded in $L^2$-sense.
\end{lemma}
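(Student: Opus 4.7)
\medskip\noindent\textbf{Proof proposal.} The plan is to use the Weitzenböck formula together with Kato's inequality to reduce the bundle-valued estimate to the strong positivity hypothesis for the scalar Schrödinger form, and then conclude via a standard density/spectral calculus argument.

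First, for $\eta\in\Omega_c^k$ I would integrate the Weitzenböck formula (Theorem \ref{pre-theorem1}) against $\eta$ in $L^2(\mu)$. Using $\Box_\mu=\nabla^*_\mu\nabla$ this gives
\[
\big\langle(\Delta_\mu^{(k)}+\sigma)\eta,\eta\big\rangle
= \|\nabla\eta\|_2^{\,2}
  + \big\langle(\mathscr R^{(k)}-(\Hess h)^{(k)})\eta,\eta\big\rangle
  + \sigma\|\eta\|_2^{\,2}.
\]
Since the lowest eigenvalue of $\mathscr R^{(k)}-(\Hess h)^{(k)}$ at $x$ is $\lambda_k(x)\ge -V_k(x)$, the curvature term is bounded below by $-\int_M V_k|\eta|^2\,\d\mu$, so
\[
\big\langle(\Delta_\mu^{(k)}+\sigma)\eta,\eta\big\rangle
\ge \|\nabla\eta\|_2^{\,2} - \int_M (V_k-\sigma)|\eta|^2\,\d\mu.
\]

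Next I would apply Kato's inequality $|\nabla|\eta||\le|\nabla\eta|$ (pointwise a.e.) together with the strong positivity of $\Delta_\mu-V_k+\sigma$. Since $|\eta|$ is Lipschitz with compact support, a standard mollification argument extends the inequality in Definition \ref{strongly-positive} from $C_c^\infty(M)$ to $|\eta|$, yielding
\[
\int_M (V_k-\sigma)|\eta|^2\,\d\mu \ \le\ A\int_M \big|\nabla|\eta|\big|^2\,\d\mu \ \le\ A\|\nabla\eta\|_2^{\,2}.
\]
Combining with the previous inequality gives the core coercivity bound
\[
(1-A)\,\|\nabla\eta\|_2^{\,2} \ \le\ \big\langle(\Delta_\mu^{(k)}+\sigma)\eta,\eta\big\rangle \ =\ \big\|(\Delta_\mu^{(k)}+\sigma)^{1/2}\eta\big\|_2^{\,2}
\]
for every $\eta\in\Omega_c^k$.

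Finally, to conclude boundedness of $T_\sigma^{(k)}=\nabla(\Delta_\mu^{(k)}+\sigma)^{-1/2}$, I would take an arbitrary $\xi\in\Omega_c^k$ and set $\eta:=(\Delta_\mu^{(k)}+\sigma)^{-1/2}\xi$; this $\eta$ lies in the form domain of the self-adjoint realization $\Delta_\mu^{(k)}+\sigma$, and since $\Omega_c^k$ is a form core, the coercivity estimate extends to $\eta$ by density, giving
\[
\big\|\nabla(\Delta_\mu^{(k)}+\sigma)^{-1/2}\xi\big\|_2 \ \le\ (1-A)^{-1/2}\,\|\xi\|_2.
\]
Since $\Omega_c^k$ is dense in $\Omega^k_{L^2}$, the operator $T_\sigma^{(k)}$ extends to a bounded operator on $\Omega^k_{L^2}$ with norm at most $(1-A)^{-1/2}$.

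The main technical point is the application of strong positivity to $|\eta|$ rather than to a smooth test function: one needs to check that the inequality in Definition \ref{strongly-positive} passes to the Sobolev closure. This is routine because $V_k\in L^1_{\loc}(\mu)$ and $|\eta|$ has compact support, so mollification produces $C_c^\infty$ approximations converging in $W^{1,2}$ along which both sides of the inequality pass to the limit.
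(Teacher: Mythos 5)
Your proof is correct and takes essentially the same route as the paper's: test the Weitzenb\"ock formula against $\eta$, lower-bound the curvature endomorphism by $-V_k$, apply strong positivity to $|\eta|$, invoke Kato's inequality, and conclude the coercivity bound $(1-A)\|\nabla\eta\|_2^2\le\big\|(\Delta_\mu^{(k)}+\sigma)^{1/2}\eta\big\|_2^2$, hence the operator norm $(1-A)^{-1/2}$. Your two extra remarks---that the strong-positivity inequality must be extended from $C_c^\infty(M)$ test functions to the merely Lipschitz, compactly supported $|\eta|$ by mollification, and that the final substitution $\eta=(\Delta_\mu^{(k)}+\sigma)^{-1/2}\xi$ needs a form-core/density argument because such $\eta$ is generally not compactly supported---spell out technical steps that the paper passes over silently, and they are correct and worth keeping.
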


\begin{proof}
 We use the Bochner formula for $a\in \Omega_c^k$. According to the Weitzenb\"{o}ck formula
 \eqref{Eq:Weitzenboeck} and the
strong positivity of $\Delta_{\mu}-V_k+\sigma$, we have
\begin{align*}
\|\nabla a\|^2_2&=\langle \square_{\mu} a, a \rangle\\
&=\langle \Delta^{(k)}_{\mu} a, a \rangle -\langle (\mathscr{R}^{(k)}-(\Hess h)^{(k)})a, a \rangle\\
&\leq \langle \Delta^{(k)}_{\mu} a, a \rangle +\int_M (V_k-\sigma)|a|^2(x)\, \mu(\d x)+\sigma \|a\|^2\\
&\leq \langle \Delta^{(k)}_{\mu} a, a \rangle +A\int_M \big|{\rm d} |a|\big|^2(x)\, \mu(\d x)+\sigma \|a\|^2\\
&\leq \|(\Delta^{(k)}_{\mu})^{1/2}a\|^2_2 +A \|\nabla a\|^2_2+\sigma \|a\|^2_2,
\end{align*} 
where the last inequality comes from the Kato inequality (see e.g. \cite{HSU80}). 
This implies 
\begin{align*}
\|\nabla a\|^2_2&\leq \frac{1}{1-A} \|(\Delta^{(k)}_{\mu}+\sigma)^{1/2}a\|^2_2.
\end{align*}
We complete the proof by letting $a=(\Delta^{(k)}_{\mu}+\sigma)^{-1/2}b$ for $b\in \Omega_c^k$.
\end{proof}
 As the local version of  the Riesz transform is bounded in $L^2$, by the interpolation theorem,  the weak $(1,1)$ property for $T_{\sigma}^{(k)}$ already implies $L^p$-boundedness for all $p\in (1,2]$.
Hence we aim to study the weak $(1,1)$ property of $T_{\sigma}^{(k)}$ for $\sigma \geq 0$ suitable: there exists $c>0$ such that
\begin{align}\label{eqn-bdT}
\sup_{\lambda>0} \lambda \mu\left(\left|T_{\sigma}^{(k)} a\right|>\lambda\right) \leq c\,\mu(|a|), \quad a \in \Omega_c^k.
\end{align}
To this end, we use a version of the localization technique of
\cite[Section 4]{PTTS-2004} on the finite overlap property of $M$,
which has also been used in \cite{CCT}.\goodbreak

\begin{lemma}\textup{\cite{PTTS-2004}}\label{lem-fop}
  Assume that condition \eqref{LD} holds. There exists a countable subset
  $\mathcal{C}=\{x_j\}_{j\in \Lambda}\subset M$ such that
  \begin{itemize}
  \item [{\rm (i)}] $M=\cup_{j\in \Lambda} B(x_j,1)$;

  \item [{\rm (ii)}] $\{B(x_j,1/2)\}_{j\in\Lambda}$ are disjoint;

  \item [{\rm (iii)}] there exists $N_0\in\N$ such that for any
    $x\in M$, at most $N_0$ balls $B(x_j,4)$ contain $x$;

  \item [{\rm (iv)}] for any $c_0\geq 1$, there exists $C>0$ such that
    for any $j\in\Lambda$, $x\in B(x_j,c_0)$ and $r\in (0,\fz)$,
    \begin{align*}
      \mu\lf(B(x,2r)\cap B(x_j,c_0)\r)\le C \mu\lf(B(x,r)\cap B(x_j,c_0)\r) 
    \end{align*}
    and
    \begin{align*}
      \mu(B(x,r))\le C \mu\lf(B(x,r)\cap B(x_j,c_0)\r) 
    \end{align*}
    for any $x\in B(x_j,c_0)$ and $r\in (0,2c_0]$.
\end{itemize}
\end{lemma}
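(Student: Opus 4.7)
The plan is to extract $\{x_j\}_{j\in\Lambda}$ as a maximal $(1/2)$-separated subset of $M$, via Zorn's lemma (countability follows from second-countability of $M$). Disjointness of $\{B(x_j,1/2)\}_{j\in\Lambda}$ is automatic from the construction, which gives (ii). Maximality forces $M=\bigcup_{j\in\Lambda}B(x_j,1)$: any point $x$ missed by all these unit balls would satisfy $\rho(x,x_j)\ge 1$ for every $j$, so that $\{x\}\cup\{x_j\}_{j\in\Lambda}$ would still be $(1/2)$-separated, contradicting maximality. This yields (i).

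For (iii), fix $x\in M$ and observe that whenever $x\in B(x_j,4)$ one has $B(x_j,1/2)\subset B(x,9/2)$. Since the balls $B(x_j,1/2)$ are pairwise disjoint, the cardinality of $\{j:x\in B(x_j,4)\}$ is bounded by $\mu(B(x,9/2))/\inf_{x_j\in B(x,4)}\mu(B(x_j,1/2))$. Both quantities are comparable to $\mu(B(x,1/2))$ with constants depending only on the parameters in \eqref{LD}: the numerator by applying \eqref{LD} at centre $x$ with radii $1/2$ and $9/2$, and the denominator because $B(x,1/2)\subset B(x_j,9/2)$, so that $\mu(B(x,1/2))\le \mu(B(x_j,9/2))\le C\,\mu(B(x_j,1/2))$ by \eqref{LD} at centre $x_j$. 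This produces a uniform overlap bound $N_0$.

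For (iv) the strategy is to insert, for given $x\in B(x_j,c_0)$ and $r\in(0,2c_0]$, a ball $B(y,\alpha r)\subset B(x,r)\cap B(x_j,c_0)$ with $\alpha=\alpha(c_0)>0$ independent of $x,x_j,r$. If $\rho(x,x_j)+r\le c_0$, take $y=x$; otherwise, let $\gamma$ be a minimising geodesic from $x$ to $x_j$ (it exists by completeness of $M$) and set $y:=\gamma(s)$, choosing $s\in(0,\rho(x,x_j))$ so that $s+\alpha r\le r$ and $\rho(x,x_j)-s+\alpha r\le c_0$ hold simultaneously (this is feasible for $\alpha\le 1/4$, since $\rho(x,x_j)<c_0$ and $r\le 2c_0$). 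The triangle inequality then delivers the required inclusion, and both inequalities in (iv) follow from the chain
\[
\mu(B(x,2r)\cap B(x_j,c_0))\le \mu(B(y,3r))\le C\,\mu(B(y,\alpha r))\le C\,\mu(B(x,r)\cap B(x_j,c_0)),
\]
the middle step being \eqref{LD} (with constants depending only on $c_0$ and on the parameters in \eqref{LD}, since $r\le 2c_0$); the second inequality of (iv) is obtained analogously, starting from $\mu(B(x,r))\le \mu(B(y,2r))$. The main obstacle is precisely this geometric insertion near the boundary of $B(x_j,c_0)$, which forces the careful calibration of $s$ and $\alpha$; once settled, the remainder is pure bookkeeping with \eqref{LD}, and the construction coincides with the Vitali-type argument in \cite[Section 4]{PTTS-2004}.
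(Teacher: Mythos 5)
The paper does not prove this lemma; it simply cites \cite{PTTS-2004}, so there is no in-text proof to compare against. Your reconstruction follows the standard Vitali-type covering argument, which is indeed the route taken in that reference, and the overall structure is sound.

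One point needs fixing in the construction underlying (i) and (ii). You take $\mathcal{C}$ to be a \emph{maximal $(1/2)$-separated} set and claim disjointness of $\{B(x_j,1/2)\}$ is automatic. If ``$(1/2)$-separated'' means $\rho(x_i,x_j)\ge 1/2$ for $i\ne j$, this does \emph{not} give disjointness of the half-unit balls: two centres at distance exactly $1/2$ have half-unit balls that overlap (their geodesic midpoint lies in both). What you want is a maximal set with $\rho(x_i,x_j)\ge 1$ pairwise, i.e.\ a maximal $1$-separated set. Then (ii) holds because $\rho(x_i,x_j)\ge 1$ forces $B(x_i,1/2)\cap B(x_j,1/2)=\emptyset$, and maximality gives (i): any $x$ not covered by the unit balls would satisfy $\rho(x,x_j)\ge 1$ for all $j$, so $\{x\}\cup\mathcal{C}$ would still be $1$-separated, contradicting maximality. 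Your argument for (i) is correct once the separation scale is corrected; as currently written, the same definition of ``$(1/2)$-separated'' cannot simultaneously make (i) and (ii) work.

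Your treatment of (iii) and (iv) is essentially correct. For (iii), the comparison $\mu(B(x,9/2))\lesssim\mu(B(x_j,1/2))$ via two applications of \eqref{LD} (once at centre $x$, once at centre $x_j$, both with bounded radii) is exactly what is needed; note the constant absorbs the exponential factor $e^{4\sigma_1}$ since all radii are $\le 9/2$. For (iv), the insertion of $B(y,\alpha r)\subset B(x,r)\cap B(x_j,c_0)$ by sliding along a minimising geodesic toward $x_j$ is the right idea, and your calibration $s=\max\bigl(0,\,\rho(x,x_j)-c_0+\alpha r\bigr)$ with $\alpha\le 1/2$ does satisfy $s\le r(1-\alpha)$, $s\le\rho(x,x_j)$ and $\rho(x,x_j)-s+\alpha r\le c_0$. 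Two small gaps: (a) you should say explicitly that the exponential factor in \eqref{LD} is controlled because $\alpha r\le 2\alpha c_0$, so the doubling constant for $B(y,\alpha r)\to B(y,3r)$ depends only on $c_0,m,\sigma_1$; (b) the first inequality of (iv) is stated for all $r>0$, whereas you only treat $r\le 2c_0$ --- for $r>2c_0$ one has $B(x_j,c_0)\subset B(x,2c_0)\subset B(x,r)$, so $B(x,r)\cap B(x_j,c_0)=B(x,2r)\cap B(x_j,c_0)=B(x_j,c_0)$ and the inequality is trivial with $C=1$. With those two remarks added, and the separation scale corrected to $1$, the proof is complete.
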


The following lemma provides the localization argument in order to prove
\eqref{eqn-bdT}.

\begin{lemma}\label{lem-bcT}
  Keeping the assumptions as in Theorem \ref{main-them1}, let
  $\mathcal{C}=\{x_j\}_{j\in\Lambda}$ be a countable subset of $M$
  having the finite overlap property as in Lemma
  \textup{\ref{lem-fop}}. Let $\sigma>2C_0$ where $C_0$ is as in
 Theorem  \ref{L2-estimate}. Suppose that there exists a constant
  $c>0$ such that
  \begin{align}\label{eqn-localT}
    \mu\lf(\{x\colon  \1_{B(x_j,2)}\,|T_{\sigma}^{(k)}a (x)|>\lambda\}\r)\leq \frac{c}{\lambda}\|\,|a|\,\|_1 
  \end{align}
  for any $j\in \Lambda$, $\lz\in (0,\fz)$ and
  $a\in \Omega_c^k$ supported in $B(x_j,1)$. Then property \eqref{eqn-bdT} holds for any
  $a\in \Omega_c^k$.
\end{lemma}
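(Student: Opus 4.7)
The plan is to combine a partition of unity subordinate to $\{B(x_j,1)\}_{j\in\Lambda}$ with a splitting of each $T_\sigma^{(k)} a_j$ into its diagonal part on $B(x_j,2)$, which will be controlled by the localized hypothesis \eqref{eqn-localT} together with the finite overlap property Lemma~\ref{lem-fop}(iii), and its off-diagonal tail, which will be controlled by the weighted $L^1$ kernel bound of Theorem~\ref{cor-1}.

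First I would fix a smooth partition of unity $\{\chi_j\}_{j\in\Lambda}$ subordinate to $\{B(x_j,1)\}$, obtained by normalizing non-negative bump functions $\phi_j\in C_c^\infty(B(x_j,1))$ with $\phi_j(x_j)>0$: since Lemma~\ref{lem-fop}(i) ensures $\sum_i\phi_i>0$ on $M$, the quotient $\chi_j:=\phi_j/\sum_i\phi_i$ satisfies $\sum_j\chi_j=1$ with $\supp\chi_j\subset B(x_j,1)$. For $a\in\Omega_c^k$, decompose $a=\sum_j a_j$ with $a_j:=\chi_j a\in\Omega_c^k$; non-negativity of $\chi_j$ gives $|a_j|=\chi_j|a|$ pointwise, hence $\sum_j\|a_j\|_1=\|a\|_1$.

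Next I would split, for each $x\in M$,
\begin{align*}
|T_\sigma^{(k)} a(x)|&\le L(x)+N(x),\\
L(x)&:=\sum_{j\in\Lambda} \1_{B(x_j,2)}(x)\,|T_\sigma^{(k)} a_j(x)|,\\
N(x)&:=\sum_{j\in\Lambda} \1_{M\setminus B(x_j,2)}(x)\,|T_\sigma^{(k)} a_j(x)|,
\end{align*}
so that $\mu(|T_\sigma^{(k)} a|>\lambda)\le \mu(L>\lambda/2)+\mu(N>\lambda/2)$. By Lemma~\ref{lem-fop}(iii), at any $x$ at most $N_0$ summands of $L(x)$ are non-zero, so $L(x)>\lambda/2$ forces at least one summand to exceed $\lambda/(2N_0)$; applying \eqref{eqn-localT} to each $a_j$ and summing over $j\in\Lambda$ yields $\mu(L>\lambda/2)\le (2N_0 c/\lambda)\sum_j\|a_j\|_1=2N_0 c\,\|a\|_1/\lambda$.

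For the non-local term I would use Chebyshev's inequality $\mu(N>\lambda/2)\le 2\|N\|_1/\lambda$ and compute $\|N\|_1$ by Fubini, using the representation \eqref{eqn-defT} and the fact that, for $x\notin B(x_j,2)$ and $y\in\supp a_j\subset B(x_j,1)$, one has $\rho(x,y)\ge 1$. Applying Theorem~\ref{cor-1} with $t=1$ to the inner spatial integral and summing over $j$ leads to
\begin{align*}
\|N\|_1\le C\,\|a\|_1 \int_0^\infty \bigl(1+\sqrt{\sigma_3 s}\bigr)\,\e^{(2C_0-\sigma)s}\,\e^{-\gamma/(2s)}\,\frac{\d s}{s}.
\end{align*}
The hypothesis $\sigma>2C_0$ handles the $s\to\infty$ tail, and the Gaussian-in-$1/s$ factor $\e^{-\gamma/(2s)}$ absorbs the singularity of $s^{-1}$ as $s\to 0^+$, so the integral is finite and $\mu(N>\lambda/2)\le C'\|a\|_1/\lambda$; combining with the local estimate yields \eqref{eqn-bdT}. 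The main obstacle is precisely the convergence check at both endpoints of this $s$-integral: the $t=1$ cutoff in Theorem~\ref{cor-1} is what rescues integrability near zero, while $\sigma>2C_0$ is what rescues it at infinity, explaining why $\sigma$ in Theorem~\ref{main-them1} must be taken sufficiently large unless $C_0=0$.
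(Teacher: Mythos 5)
Your proposal is correct and follows essentially the same approach as the paper's proof: a partition of unity subordinate to $\{B(x_j,1)\}$, a split of $T_\sigma^{(k)}a$ into the local part $\sum_j \1_{2B_j}T_\sigma^{(k)}(a\varphi_j)$ handled by the finite overlap property and \eqref{eqn-localT}, and the off-diagonal part handled by Markov's inequality, Fubini, the observation that the support condition forces $\rho(x,y)\geq 1$, and the $L^1$ kernel bound of Theorem~\ref{cor-1} with $t=1$, whose $s$-integral converges precisely because $\e^{-\gamma/(2s)}$ controls $s\to 0^+$ and $\sigma>2C_0$ controls $s\to\infty$. The only cosmetic difference is that you make the construction of the partition of unity explicit and phrase the off-diagonal bound termwise in $j$ before summing, whereas the paper first bounds $\sum_j |(1-\1_{2B_j})(x)\varphi_j(y)|$ by $N_0\1_{\{\rho(x,y)\geq 1\}}$ and then integrates once; these are equivalent reorganizations of the same estimate.
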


\begin{proof}
  For $j\in \Lambda$, set $B_j:=B(x_j,1)$ and let
  $\{\varphi_j\}_{j\in\Lambda}$ be a $C^{\infty}$-partition of the unity
  such that $0\le \varphi_j\le 1$ and each $\varphi_j$ is supported in $B_j$.  Then,
  for $a\in \Omega_c^k$ and $x\in M$, write
  \begin{align*}
    T_{\sigma}^{(k)}a(x)= \sum_{j\in\Lambda} \1_{2B_j}T_{\sigma}^{(k)}(a\varphi_j)(x)+\sum_{j\in\Lambda}(1-\1_{2B_j})T_{\sigma}^{(k)}(a\varphi_{j})(x),
  \end{align*}
  which yields that for any $\lambda>0$,
  \begin{align*}
   & \mu(\{x\colon |T_{\sz}^{(k)}a(x)|>\lambda\}) \\
    &\leq  \mu\left(\bigg\{x\colon \dsum_{j\in\Lambda} \1_{2B_j}|T_{\sigma}^{(k)}(a\varphi_j)(x)|>\frac{\lambda}{2}\bigg\}\right)
      + \mu\left(\bigg\{x\colon \dsum_{j\in\Lambda}(1-\1_{2B_j})|T_{\sigma}^{(k)}(a\varphi_j)(x)|>\frac{\lambda}{2}\bigg\}\right)\\
    &=:\mathrm{I}_1+\mathrm{I}_2.
  \end{align*}
  For $\mathrm{I}_1$, by Lemma \ref{lem-fop}\,(iii) and condition 
  \eqref{eqn-localT}, we have
  \begin{align}\label{lem-eI1}
    \mathrm{I}_1\le  \dsum_{j\in\Lambda}\mu\bigg(\bigg\{x:  \1_{2B_j}|T_{\sigma}^{(k)}(a\varphi_j)(x)|>\frac{\lambda}{2N_0}\bigg\}\bigg)
    \ls\frac{1}{\lambda}\|\,|a|\,\|_1
  \end{align}
  as desired, where the notation $a\ls b$ means $a\leq Cb$ for some
  constant $C$.\smallskip

  To bound $\mathrm{I}_2$, again by Lemma \ref{lem-fop}\,(iii), since
  $\varphi_j$ is supported in $B_j$, it is easy to see that
  \begin{align*}
    \dsum_{j\in\Lambda}|(1-\1_{2B_j})(x)\varphi_j(y)|\leq N_0 \1_{\{\rho(x,y)\geq 1\}}.
  \end{align*}
  Hence, according to the definition of $T_{\sigma}^{(k)}$ in \eqref{eqn-defT} and
  Theorem \ref{cor-1}, we get
  \begin{align*}
    \mathrm{I}_2
    &\leq \frac{2}{\lz} \dsum_{j\in\Lambda} \lf\|\,\big|\lf(1-\1_{2B_j}\r)
     T_{\sigma}^{(k)}\lf(a\varphi_j\r)\big|\,\r\|_{1}
    \\
    &\ls \frac{1}{\lz} \int_M\lf(\int_0^{\infty}\frac{\e^{-\sigma t}}{\sqrt{t}} \int_M \lf|\nabla_x\exp{(-\Delta_{\mu}^{(k)} t)}(x,y)\r|\sum_{j\in\Lambda}\big|(1-\1_{2B_j})(x)\varphi_j(y)\big|\,|a(y)|\,\mu(\d y)\,\d t\r)\mu(\d x)\\
    &\ls\frac{1}{\lz}\int_{M}\int_0^{\infty}\frac{\e^{-\sigma t}}{\sqrt{t}} \lf(\int_{\rho(x,y)\geq 1} |\nabla_x\exp{(-\Delta_{\mu}^{(k)} t)}(x,y)|\,\mu(\d x)\r)\,\d t\, |a(y)|\,\mu(\d y)\\
    &\leq \frac{1}{\lz} \int_M |a(y)|\,\mu(\d y) \int_0^{\infty}\e^{-\sigma t}(1+\sqrt{\sigma_3t})\,\e^{2C_0 t}\,\e^{-\gamma/2t}t^{-1}\,\d t,
  \end{align*}
  where $\gamma\in (0,\az)$ and $C_0$ is as in Theorem \ref{cor-1}. Thus, since $\sigma >2C_0$, we obtain
  \begin{align*}
    \mathrm{I}_2\ls \frac{1}{\lz}\int_0^{\infty}\e^{(2C_0-\sigma )t
    -{\gamma}/{2t}}\frac{(1+\sqrt{\sigma_3 t})}{t}\,\d t \,\|a\|_1\ls \frac{1}{\lz}\|a\|_1
  \end{align*}
where as usual $\|a\|_1\:=\|\,|a|\,\|_1$.  This combined with the estimate of $I_1$ in \eqref{lem-eI1}
  finishes the proof of Lemma \ref{lem-bcT}.
\end{proof}

We now turn to the proof of property \eqref{eqn-localT}, where we remove the subscript $j$ and
write $B$ for each $B(x_j,1)$ for simplicity.  Let $c_0\geq 1$. By Lemma
\ref{lem-fop}(iv), we have that $(c_0B,\mu,\rho)$ is a metric measure
subspace satisfying the {\it volume doubling property} that there
exists $C_D\ge 1$ such that
\begin{align}\label{eqn-D}
  \mu\lf(B(x,2r)\cap c_0B\r)\leq C_D \,\mu\lf(B(x,r)\cap c_0B\r) \tag{\bf D}
\end{align}
for all $x\in c_0B$ and $r>0$.\smallskip

We also use the following Calder\'on-Zygmund decomposition from
\cite{CW-book}, where $\mathcal{X}$ will replace $c_0B$.

\begin{lemma}[\cite{CW-book}]\label{lem-czdecom}
  Let $(\mathcal{X},\nu,\rho)$ be a metric measure space satisfying
  \eqref{LD}.  Let
  $f\in L^1(\mathcal{X})$ and $\lz\in (0,\fz)$. Assume
  $\|f\|_{L^1}< \lz \nu(\mathcal{X})$.  Then $f$ has a decomposition
  of the form $$\textstyle f=g+h=g+\sum_ih_i$$ such that
  \begin{enumerate}[{\rm(a)}]
  \item $g(x)\leq C\lambda$ for almost all $x\in M$;
  \item there exists a sequence of balls $\tilde{B}_i=B(x_i,r_i)$ so that the
    support of each $h_i$ is contained in $\tilde{B}_i$:
    \begin{align*}
      \int_{\mathcal{X}} |h_i(x)|\, \nu(\d x)\leq C\lambda \nu(\tilde{B}_i)\quad \text{and}\quad
      \int_{\mathcal{X}} h_i(x)\, \nu(\d x)=0;
    \end{align*}
  \item
    $\displaystyle\sum_i \nu(\tilde{B}_i)\leq
    \frac{C}{\lambda}\int_{\mathcal{X}} |f(x)|\, \nu(\d x)$;
  \item there exists $k_0\in \mathbb{N}^*$ such that each point of $M$
    is contained in at most $k_0$ balls $\tilde{B}_i$.
  \end{enumerate}
\end{lemma}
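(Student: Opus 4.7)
The plan is to carry out the classical Calder\'on--Zygmund decomposition in the setting of a space of homogeneous type, where the doubling property plays the role that cube bisection plays in $\mathbb{R}^n$. The two essential tools are (i) the weak $(1,1)$ boundedness of the Hardy--Littlewood maximal operator
\[
Mf(x):=\sup_{B\ni x}\frac{1}{\nu(B)}\int_B|f|\,\d\nu,
\]
which on any doubling space follows from a Vitali covering argument, and (ii) a Whitney-type covering of an open set by balls with controlled overlap whose mild dilations touch the complement; both are standard consequences of \eqref{LD}.

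Given $f\in L^1(\mathcal X)$ and $\lambda>0$ with $\|f\|_{L^1}<\lambda\,\nu(\mathcal X)$, set $\Omega_\lambda:=\{Mf>\lambda\}$. The weak-type bound gives $\nu(\Omega_\lambda)\leq C\lambda^{-1}\|f\|_{L^1}$, and the hypothesis $\|f\|_{L^1}<\lambda\,\nu(\mathcal X)$ ensures that $\Omega_\lambda$ is a proper open subset of $\mathcal X$. Lebesgue differentiation then yields $|f|\leq\lambda$ almost everywhere on $\mathcal X\setminus\Omega_\lambda$. I would then apply the Whitney covering lemma to $\Omega_\lambda$: there exist balls $\tilde B_i=B(x_i,r_i)\subset\Omega_\lambda$ and a dilation factor $c_1>1$ such that $\Omega_\lambda=\bigcup_i\tilde B_i$, the family $\{\tilde B_i\}$ has overlap bounded by some $k_0$ (which is property (d)), and each $c_1\tilde B_i$ meets $\mathcal X\setminus\Omega_\lambda$. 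Let $\{\chi_i\}$ be a partition of unity subordinate to $\{\tilde B_i\}$, and define the weighted averages $f_i:=\bigl(\int\chi_i\,\d\nu\bigr)^{-1}\!\int f\chi_i\,\d\nu$. Set
\[
g:=f\1_{\mathcal X\setminus\Omega_\lambda}+\sum_i f_i\chi_i,\qquad h_i:=(f-f_i)\chi_i.
\]

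It remains to verify (a)--(c). The pointwise bound $|g|\leq C\lambda$ on $\mathcal X\setminus\Omega_\lambda$ is immediate, while on $\tilde B_i$ one picks a point $y\in c_1\tilde B_i$ with $Mf(y)\leq\lambda$; the doubling property then forces the $f$-average over $\tilde B_i$ (hence $|f_i|$) to be at most $C\lambda$, proving (a). The support condition in (b) is immediate, the cancellation $\int h_i\,\d\nu=0$ is built into the choice of $f_i$, and $\int|h_i|\,\d\nu\leq C\lambda\,\nu(\tilde B_i)$ follows from $|f_i|\leq C\lambda$ together with the doubling estimate $\int|f|\chi_i\,\d\nu\leq C\lambda\,\nu(\tilde B_i)$. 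Finally, (c) follows from finite overlap and the weak-$(1,1)$ bound:
\[
\sum_i\nu(\tilde B_i)\leq k_0\,\nu(\Omega_\lambda)\leq\frac{C}{\lambda}\|f\|_{L^1}.
\]
The only genuinely nontrivial step is the construction of the Whitney covering with bounded overlap and the existence of a fixed dilation $c_1$ for which $c_1\tilde B_i$ hits the complement of $\Omega_\lambda$; this is exactly where the doubling condition \eqref{LD} is used in an essential way. Once the covering is in hand, all the remaining verifications are routine.
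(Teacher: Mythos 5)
Your sketch is the standard Coifman--Weiss construction of the Calder\'on--Zygmund decomposition in a space of homogeneous type, and this is exactly what the paper's statement is -- it is attributed to \cite{CW-book} and no in-paper proof is supplied, so there is nothing internal to compare against. As a proof of the stated lemma it is essentially correct, modulo two points that you should not leave tacit.

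First, as written the hypothesis is only the local doubling condition \eqref{LD}, which allows an exponential volume factor $\exp(\sigma_1(\alpha-1)r)$ and is therefore \emph{not} the genuine doubling property your argument needs: both the weak $(1,1)$ bound for $\SM$ via Vitali covering and the Whitney covering with uniformly bounded overlap and a fixed dilation factor $c_1$ require a doubling constant that is uniform over all scales. The resolution, which you should state, is that the lemma is only applied after localization, with $\mathcal X$ a fixed ball $c_0 B(x_j,1)$; on such a bounded subspace \eqref{LD} does reduce to a bona fide doubling condition with a uniform constant (this is the content of Lemma~\ref{lem-fop}(iv) in the paper and the estimate recorded there as~\eqref{eqn-D}). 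Without this restriction, or without replacing \eqref{LD} by honest doubling, the proof does not go through.

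Second, to deduce $|f_i|\le C\lambda$ from the bound $\frac{1}{\nu(\tilde B_i)}\int_{\tilde B_i}|f|\,\d\nu\le C\lambda$, you implicitly use $\int\chi_i\,\d\nu\gtrsim\nu(\tilde B_i)$. This is true but not automatic from ``a partition of unity subordinate to $\{\tilde B_i\}$''; it requires choosing the $\chi_i$ so that each is, say, $\ge 1/k_0$ on a shrunken ball $c\tilde B_i$ (using that the shrunken Whitney balls are pairwise disjoint, the bounded overlap, and doubling). With these two points made explicit, the argument is complete and matches the cited source.
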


\begin{lemma}\label{lem-le}
   Let $M$ be a complete non-compact Riemannian manifold satisfying  \eqref{LD} and \eqref{Upper-bound}.   Let $\lz\in (0,\fz)$ and
  $f=|a|\in L^1(B)$ be as in Lemma \textup{\ref{lem-czdecom}}. Let furthermore
  $\{h_i\}$ be the sequence of bad functions of $f$ as in Lemma
  \textup{\ref{lem-czdecom}} and $\big(\exp{\big(-\Delta^{(k)}_{\mu}t-\sigma t\big)}\big)_{t\ge 0}$
  the heat semigroup associated to $-(\Delta^{(k)}_{\mu}+\sz)$ with $\sz>\tilde{\sigma}$,
  where $\tilde{\sigma}$ is as in Lemma \ref{lem3}. Then there
  exists a constant $C>0$ independent of $f$ such that
  \begin{align*}
    \bigg\|\sum_i\exp{\Big(-t_i\Delta^{(k)}_{\mu}-\sigma t_i\Big)}\tilde{h}_i\bigg\|_2^2\leq C\lambda \,\|a\|_1
  \end{align*}
  where  $\tilde{h}_i= |h_i| \frac{a}{|a|}$ and $t_i=r_i^2$ with $r_i$ denoting the radii of the balls $B_i$ as in Lemma \textup{\ref{lem-czdecom}\,(b)}.
\end{lemma}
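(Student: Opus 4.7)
The plan is to expand the squared $L^2$-norm into a double sum, dominate each term by means of the Gaussian kernel bound from Theorem \ref{Gaussian-estimate}, and then close the argument by summing using the Calder\'on--Zygmund structure together with the volume doubling property. First, by the self-adjointness of $\Delta^{(k)}_\mu$ and the semigroup property,
\begin{align*}
\bigg\|\sum_i \mathrm{e}^{-t_i(\Delta^{(k)}_\mu+\sigma)}\tilde h_i\bigg\|_2^2 = \sum_{i,j}\big\langle \tilde h_i,\, \mathrm{e}^{-(t_i+t_j)(\Delta^{(k)}_\mu+\sigma)}\tilde h_j\big\rangle,
\end{align*}
so by symmetry it suffices to control pairs with $t_i\le t_j$ (the diagonal being handled identically with $t_j=t_i$).

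Next, Theorem \ref{Gaussian-estimate} combined with the assumption $\sigma>\tilde\sigma$ allows me to dominate the operator kernel of $\mathrm{e}^{-s(\Delta^{(k)}_\mu+\sigma)}$, $s:=t_i+t_j$, pointwise by
\begin{align*}
\frac{C\,\mathrm{e}^{-(\sigma-\tilde\sigma)s}}{\mu(B(y,\sqrt{s}))}\,\exp\!\bigg(-\frac{\alpha\,\rho(x,y)^2}{s}\bigg).
\end{align*}
Since $\tilde h_j$ is supported in $B_j=B(x_j,r_j)$ and $\sqrt{s}\ge r_j=\sqrt{t_j}$, the local doubling property of Lemma \ref{lem-fop}\,(iv) yields $\mu(B(y,\sqrt{s}))\ge c\,\mu(B_j)$ uniformly in $y\in B_j$. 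Combining this with the Calder\'on--Zygmund bounds $\|h_i\|_1\le C\lambda\mu(\tilde B_i)$, $\|h_j\|_1\le C\lambda\mu(\tilde B_j)$ (Lemma \ref{lem-czdecom}\,(b)) gives, for $t_i\le t_j$,
\begin{align*}
\big|\big\langle \tilde h_i,\, \mathrm{e}^{-(t_i+t_j)(\Delta^{(k)}_\mu+\sigma)}\tilde h_j\big\rangle\big| \le C\lambda^2\,\mu(\tilde B_i)\,\mathrm{e}^{-(\sigma-\tilde\sigma)(t_i+t_j)}\,\mathrm{e}^{-c\,\rho(x_i,x_j)^2/(t_i+t_j)}.
\end{align*}

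The final and most delicate step is the summation. For fixed $i$, I would sum over $j$ with $t_j\ge t_i$ by partitioning according to dyadic annuli around $x_i$ and counting the number of centers $x_j$ falling in each annulus via the finite overlap property of Lemma \ref{lem-fop}\,(iii), while using \eqref{LD} to control the volume growth of these shells. The Gaussian factor $\mathrm{e}^{-c\rho(x_i,x_j)^2/(t_i+t_j)}$ beats the doubling-type growth once $\rho(x_i,x_j)\ge\sqrt{t_i+t_j}$, and the exponential damping $\mathrm{e}^{-(\sigma-\tilde\sigma)(t_i+t_j)}$ absorbs the linear-in-$r$ correction in \eqref{LD}. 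This produces
\begin{align*}
\sum_{j\colon t_j\ge t_i}\big|\big\langle \tilde h_i,\, \mathrm{e}^{-(t_i+t_j)(\Delta^{(k)}_\mu+\sigma)}\tilde h_j\big\rangle\big|\le C\lambda^2\,\mu(\tilde B_i).
\end{align*}
Summing over $i$ and invoking $\sum_i\mu(\tilde B_i)\le C\lambda^{-1}\|a\|_1$ from Lemma \ref{lem-czdecom}\,(c) yields the asserted bound $C\lambda\|a\|_1$.

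The principal obstacle is the balance in the fourth step: the Gaussian factor carries decay at the scale $\sqrt{t_i+t_j}$ which may be small, while \eqref{LD} permits exponential volume growth with a slope independent of the scale. It is precisely the choice $\sigma>\tilde\sigma$ that compensates this growth at large times, allowing a uniform estimate across all scales produced by the Calder\'on--Zygmund decomposition.
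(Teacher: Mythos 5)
The overall structure of your argument—expand the square into a double sum, bound each pair by the Gaussian kernel estimate, then sum—is genuinely different from the paper's, and the per-pair estimate
\begin{align*}
\big|\big\langle \tilde h_i,\, \mathrm{e}^{-(t_i+t_j)(\Delta^{(k)}_\mu+\sigma)}\tilde h_j\big\rangle\big| \lesssim \lambda^2\,\mu(\tilde B_i)\,\mathrm{e}^{-(\sigma-\tilde\sigma)(t_i+t_j)}\,\mathrm{e}^{-c\,\rho(x_i,x_j)^2/(t_i+t_j)}
\end{align*}
is essentially correct (it uses $\mu(B(y,\sqrt{t_i+t_j}))\gtrsim\mu(\tilde B_j)$ from doubling, hence the $\mu(\tilde B_j)$ cancels against $\|h_j\|_1$). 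The problem is the summation step, and as you yourself flag, it is the most delicate one. Your claim $\sum_{j:\,t_j\ge t_i}\mathrm{e}^{-(\sigma-\tilde\sigma)(t_i+t_j)}\mathrm{e}^{-c\rho(x_i,x_j)^2/(t_i+t_j)}\le C$ fails: grouping the $j$'s into dyadic scales $t_j\approx 2^k t_i$, the finite overlap together with doubling lets the Gaussian control each scale by $O(1)$, but there are of order $\log(1/t_i)$ admissible scales $k$ (since the CZ radii range over $(0,\mathrm{diam}\,B]$). The exponential damping $\mathrm{e}^{-(\sigma-\tilde\sigma)t_j}$ cannot rescue this, because in the local setting all $t_j$ are bounded above by a fixed constant, so this factor is merely bounded above and below and never produces the needed decay at small scales. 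The result of the summation is therefore at best $O(\log(1/t_i))\,\lambda^2\mu(\tilde B_i)$, which is not uniform in $i$ and overshoots the target after summing in $i$.

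The paper does not expand the square. It establishes the pointwise domination $|\mathrm{e}^{-t_i(\Delta^{(k)}_\mu+\sigma)}\tilde h_i(x)|\lesssim\lambda\int_M\frac{\mathrm{e}^{-\sigma't_i-\alpha\rho(x,y)^2/t_i}}{\mu(B(x,\sqrt{t_i}))}\1_{\tilde B_i}(y)\,\mu(\d y)$, tests against $u\in L^2$ of unit norm, and shows that
\begin{align*}
\int_M \frac{\mathrm{e}^{-\alpha'\rho(x,y)^2/t_i}}{\mu(B(y,\sqrt{t_i}))}\,|u(x)|\,\mu(\d x)\lesssim (\SM u)(y)
\end{align*}
\emph{uniformly in the scale $t_i$}; this uniformity is exactly what your dyadic-annulus count lacks. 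One then only needs the $L^2$-boundedness of the Hardy--Littlewood maximal operator and the finite overlap bound $\|\sum_i\1_{\tilde B_i}\|_2^2\le k_0\sum_i\mu(\tilde B_i)\lesssim\lambda^{-1}\|a\|_1$. Your double-sum route can probably be repaired, but only by reversing the order of summation: fix $j$ (the larger-scale index) and sum over $i$ with $t_i\le t_j$. Then the balls $\tilde B_i$ with $r_i\le r_j$ and $x_i\in B(x_j,(N+1)\sqrt{t_j})$ are all contained in $B(x_j,(N+2)\sqrt{t_j})$, so finite overlap gives $\sum_i\mu(\tilde B_i)\le k_0\,\mu(B(x_j,(N+2)\sqrt{t_j}))\lesssim (N+2)^m\mathrm{e}^{\sigma_1(N+1)\sqrt{t_j}}\mu(\tilde B_j)$, and summing in $N$ against the Gaussian yields $C\mu(\tilde B_j)$, closing the argument after the $j$-sum. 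This is the direction in which the finite overlap actually bites; summing over the larger-scale index $j$ with $i$ fixed does not.
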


\begin{proof}
  Recall that $\supp h_i \subset B(x_i, \sqrt{t_i})$. Using the upper
  bound of the heat kernel in Lemma \ref{Gaussian-estimate} and
  Lemma \ref{lem-czdecom} (b), we have for $x\in M$,
  \begin{align*}
    \Big|\exp{\Big(-\Delta^{(k)}_{\mu}t_i-\sigma t_i\Big)}\tilde{h}_i(x)\Big|
    &\leq C\int_M \frac{\e^{-\sz' t_i-\alpha \frac{\rho^2(x,y)}{t_i}}}{\mu(B(x,\sqrt{t_i}))}\,|\tilde{h}_i(y)|\,\mu(\d  y)\\
    & \leq  \frac{C}{\mu(B(x,\sqrt{t_i}))} \,\e^{-\sz' t_i-\alpha\frac{\rho^2(x,x_i)}{t_i}}\int_{\tilde{B}_i} |h_i(y)|\,
      \mu(\d y)\\
    &\leq  C_1 \lambda \int_M \frac{\e^{-\sz' t_i-\alpha\frac{{ \rho^2(x,y)}}{t_i}}}{\mu(B(x,\sqrt{t_i}))}\1_{\tilde{B}_i}(y)\,\mu(\d y),
  \end{align*}
  for suitable $\sigma $ such that $\sz'=\sz-\tilde{\sz}>0$.
  It is therefore sufficient to verify that
  \begin{align}\label{lem-le1}
    \biggl\|\sum_i \int_M \frac{\e^{-\sz' t_i-\alpha'' \frac{\rho^2(\newdot, y)}{t_i}}}
    {\mu(B(\newdot, \sqrt{t_i}))}\1_{\tilde{B}_i}(y)\,\mu(\d y)\biggr\|_2\ls\biggl\|\sum_i\1_{\tilde{B}_i}\biggr\|_2,
  \end{align}
  since as consequence from this and Lemma \ref{lem-czdecom} we obtain  
  \begin{align*}
    \Big\|\sum_i\exp{(-\Delta^{(k)}_{\mu}t_i-\sigma t_i)}\tilde{h}_i\Big\|^2_2\ls\lambda^2\biggl\|\sum_i\1_{\tilde{B}_i}\biggr\|^2_2\ls \lambda^2 \sum_i \mu(\tilde{B}_i)\ls \lambda\, \|a\|_1.
  \end{align*}
  In order to prove \eqref{lem-le1}, we write by duality
  \begin{align}\label{lem-le2}
    &\biggl\|\sum_i \int_M \frac{\e^{-\sz' t_i-\alpha''\frac{\rho^2(\newdot,y)}{t_i}}}{\mu(B(\newdot, \sqrt{t_i}))}\1_{\tilde{B}_i}(y)\,\mu(\d y)\biggr\|_2\\ \notag
    &\qquad =\sup_{\|u\|_2=1}\lf|\int_M\lf(\sum_i\int_M\frac{\e^{-\sz' t_i-\alpha'' \frac{\rho^2(x,y)}{t_i}}}{\mu(B(x, \sqrt{t_i}))}\1_{\tilde{B}_i}(y)\,\mu(\d y)\r)u(x)\,\mu(\d x)\r|\\ \notag
    &\qquad
      \leq \sup_{\|u\|_2=1}\int_M \sum_i\lf(\int_M \frac{\e^{-\sz' t_i-\alpha''\frac{\rho^2(x,y)}{t_i}}}{\mu(B(x, \sqrt{t_i}))}|u(x)|\,\mu(\d x)\r)\1_{\tilde{B}_i}(y)\,\mu(\d y).
  \end{align}
  By the local doubling property \eqref{LD}, we have for any $x\in M$ and $y\in \tilde{B}_i$,
  \begin{align*}
   \mu(B(y, \sqrt{t_i}))\leq C\lf(1+\frac{\rho(x,y)}{\sqrt{t_i}}\r)^{m} \e^{\sigma_1\rho(x,y)/\sqrt{t_i}}
   \mu(B(x, \sqrt{t_i})).
  \end{align*}
  From this, we obtain that there exist
  $0<\tilde{\az}<\az'<\az$ such that
  \begin{align*}
    &\int_M \frac{\e^{-\sz' t_i-\alpha \frac{\rho^2(x,y)}{t_i}}}{\mu(B(x,\!\sqrt{t_i}))}\,|u(x)|\,\mu(\d x)\\
    &\hs\ls\frac{\e^{-\frac{1}{2}\sz' t_i}}{\mu(B(y,\!\sqrt{t_i}))}\int_M \e^{-\alpha'\frac{\rho^2(x,y)}{t_i}}|u(x)|\,\mu(\d x)\\
    &\hs\ls\frac{1}{\mu(B(y,\!\sqrt{t_i}))}\Bigg(\int_{\rho(x,y)<\sqrt{t_i}}|u(x)|\,\mu(\d x)
      +\sum_{k=0}^\fz\int_{2^k\!\sqrt{t_i}\leq \rho(x,y)< 2^{k+1}\!\sqrt{t_i}}\e^{-\alpha'\frac{\rho^2(x,y)}{t_i}}|u(x)|\,\mu(\d x)\Bigg)\\
    &\hs\leq \frac{1}{\mu(B(y,\!\sqrt{t_i}))}\lf(\int_{B(y,\!\sqrt{t_i})}|u(x)|\,\mu(\d x)+\sum_{k=0}^\fz\e^{-\alpha' 2^{2k}}\int_{B(y, 2^{k+1}\sqrt{t_i})}|u(x)|\,\mu(\d x)\r)\\
    &\hs=\lf(1+\sum_{k=0}^\fz\frac{\mu(B(y,\,\!2^{k+1}\sqrt{t_i}))}{\mu(B(y,\! \sqrt{t_i}))}\,\e^{-\alpha'2^{2k}}\r)(\SM u)(y)\\
    &\hs\leq \left(1+C\sum_{k=0}^\fz2^{(k+1)m}\e^{ \sigma_1(2^{k+1}-1)\sqrt{t_i}} \e^{-\tilde{\az} 2^{2k}}\right)(\SM u)(y)\\
    & \hs\leq \left(1+C\sum_{k=0}^\fz2^{(k+1)m}\e^{c2^k-\tilde{\az} 2^{2k}}\right)(\SM u)(y)\ls (\SM u)(y),
  \end{align*}
  where \begin{align*} (\SM
    u)(y):=\sup_{r>0}\frac{1}{\mu(B(y,r))}\int_{B(y,r)}|u(x)|\,\mu(\d x)
        \end{align*}
        denotes the Hardy-Littlewood maximal function of $u$.  This
        together with \eqref{lem-le2} and the $L^2$-bounded\-ness of
        $\SM $ gives
        \begin{align*}
          \biggl\|\sum_i\int_M \frac{\e^{-\alpha''\frac{\rho^2(\newdot, y)}{t_i}}}{\mu(B(\newdot,\!\sqrt{t_i}))}\1_{\tilde{B}_i}(y)\,\mu(\d y)\biggr\|_2\ls
          \sup_{\|u\|_2=1}\int_M (\SM u)(y)\sum_{i}\1_{\tilde{B}_i}(y)\,\mu(\d y)
          \ls\biggl\|\sum_i\1_{\tilde{B}_i}\biggr\|_2,
        \end{align*}
        which shows that \eqref{lem-le1} holds true and finishes the
        proof of Lemma \ref{lem-le}.
      \end{proof}

      With the help of the Lemmata \ref{lem-bcT} through \ref{lem-le}, we
      are now in position to give a proof of Theorem \ref{main-them1}. Note that
      Theorem \ref{them2} can be established along the same lines, with the slight
      difference that in this case $\sigma$ can be taken to be $0$.

\begin{proof}[Proof of Theorem \ref{main-them1}] 
  Recall that $T_{\sigma}^{(k)}=\nabla(\Delta^{(k)}_{\mu}+\sz)^{-1/2}$. We choose $\sigma$ large
  enough such that $\sigma>2C_0$ where $C_0$ 
  is defined as in Theorem \ref{L2-estimate}.  By Lemma \ref{lem-bcT}, it
  suffices to prove
  \begin{align}\label{eqn-localT-1}
    \mu\big(\{x\in 2B\colon  |T_{\sigma}^{(k)}a (x)|>\lambda\}\big)\ls \dfrac{\|a\|_1 }{\lambda},\quad \lambda\in (0,\infty)
  \end{align}
  for all $a\in \Gamma_{C_0^\fz}(\Lambda ^{k}T^*M)$. By means of Lemma \ref{lem-czdecom} with
  $\mathcal{X}=B$, we deduce that $f$ has a decomposition
  $$\textstyle |a|=g+h=g+\sum_ih_i$$ which implies
  \begin{align}\label{eqn-them-8}
   & \mu\big(\{x\in 2B: |T_{\sigma}^{(k)}a(x)|>\lambda\}\big) \notag \\
    &\leq \mu\lf(\lf\{x\in 2B:|T_{\sigma}^{(k)}\tilde{g}(x)|>\frac{\lambda}{2}\r\}\r)+
      \mu\lf(\lf\{x\in 2B: |T_{\sigma}^{(k)}\tilde{h}(x)|>\frac{\lambda}{2}\r\}\r)\\ \notag
    &=:\mathrm{I}_1+\mathrm{I}_2,
  \end{align}
  where $$\tilde{g}=g\frac{a}{|a|}\quad \mbox{and}\quad \tilde{h}=h\frac{a}{|a|}.$$ 
  Using the facts that $T_{\sigma}^{(k)}$ is bounded on $L^2(\mu)$ and that
  $|g(x)|\leq C\lambda$, we obtain as desired
  \begin{align}\label{eqn-them-9}
    \mathrm{I}_1\ls\lambda^{-2}\|T_{\sigma}^{(k)}\tilde{g}\|_2^2\ls\lambda^{-2}\|\,| \tilde{g}|\,\|_2^2
    \ls\lambda^{-1}\|g\|_1\ls\lambda^{-1}\|a\|_1.
  \end{align}

  We now turn to the estimate of $\mathrm{I}_2$. Recall that
  $\exp{(-\Delta^{(k)}_{\mu}t-\sigma t)},\, t\geq 0$ is the heat semigroup generated by
  $-(\Delta^{(k)}_{\mu}+\sz)$.  We write
  \begin{align*}
    T_{\sigma}^{(k)}\tilde{h}_i=T_{\sigma}^{(k)}\exp{(-\Delta^{(k)}_{\mu}t_i-\sigma t_i)}\tilde{h}_i+T_{\sigma}^{(k)}\Big(I-\exp{(-\Delta^{(k)}_{\mu}t_i-\sigma t_i)}\Big)\tilde{h}_i,
  \end{align*}
  where $t_i=r_i^2$ with $r_i$ the radius of $\tilde{B}_i$.  By Lemma
  \ref{lem-le}, we have
  \begin{align*}
    \biggl\|\sum_i\exp{(-\Delta^{(k)}_{\mu}t_i-\sigma t_i)}\tilde{h}_i\biggr\|_2^2\ls\lambda\, \|a\|_1.
  \end{align*}
  This combined with the $L^2$-boundedness of $T_{\sigma}^{(k)}$ yields
  \begin{align}\label{eqn-them1-1}
    \mu\left(\left\{x\in 2B\colon \lf|T_{\sigma}^{(k)}\lf(\sum_i \exp{(-\Delta^{(k)}_{\mu}t_i-\sigma t_i)}\tilde{h}_i\r)(x)\r|>\frac{\lambda}{2}\right\}\right)
    \ls \frac{1}{\lambda}\|a\|_1
  \end{align}
  as desired. Consider now the term $T_{\sigma}^{(k)}\sum_i(I-\exp{(-\Delta^{(k)}_{\mu}t_i-\sigma t_i)})\tilde{h}_i$. We
  write
  \begin{align}\label{eqn-them-4}
    &\mu\lf(\lf\{x\in 2B\colon\lf|T_{\sigma}^{(k)}\lf(\sum_i(I-\exp{(-\Delta^{(k)}_{\mu}t_i-\sigma t_i)})\tilde{h}_i\r)(x)\r|>\frac{\lambda}2\r\}\r)\\ \notag
    &\hs\leq \sum_{i}\mu(2\tilde{B}_i)+\mu\lf(\lf\{x\in 2B\setminus\cup_i2\tilde{B}_i\colon \lf|T_{\sigma}^{(k)}\lf(\sum_i(I-\exp{(-\Delta^{(k)}_{\mu}t_i-\sigma t_i)})\tilde{h}_i\r)\r|(x)>\frac{\lambda}{2}\r\}\r).
  \end{align}
  From Lemma \ref{lem-czdecom} we conclude that
  \begin{align}\label{eqn-them1-2}
    \sum_{i}\mu(2\tilde{B}_i)\ls\frac{\|a\|_1}{\lambda}.
  \end{align}
To estimate the second term, denote 
  the integral kernel of the operator $T_{\sigma}^{(k)}(I-\exp{(-\Delta^{(k)}_{\mu}t_i-\sigma t_i)})$ by $k_{t_i}^{\sz,k}(x,y)$.
  Note that
  \begin{align*}
    &(\Delta^{(k)}_{\mu}+\sz)^{-1/2}\big(I-\exp{(-\Delta^{(k)}_{\mu}t_i-\sigma t_i)}\big)\\
    &=\int_0^{\infty}\left(\frac{\exp{(-\Delta^{(k)}_{\mu}s-\sigma s)}}{\sqrt{s}}-\frac{\exp{(-\Delta^{(k)}_{\mu}(t_i+s)-\sigma (t_i+s))}}{\sqrt{s}}\right)\,\d s\\
    &=\int_0^{\infty}\left(\frac{1}{\sqrt{s}}-\frac{\1_{\{s\geq t_i\}}}{\sqrt{s-t_i}}\right)\exp{(-\Delta^{(k)}_{\mu}s-\sigma s)}\,\d s
  \end{align*}
  and
  \begin{align*}
   & T_{\sigma}^{(k)}(I-\exp{(-\Delta^{(k)}_{\mu}t_i-\sigma t_i)})\\
   &\qquad=\nabla(\Delta^{(k)}_{\mu}+\sigma)^{-1/2}(I-\exp{(-\Delta^{(k)}_{\mu}t_i-\sigma t_i)})\\
   &\qquad=\int_0^{\infty}\left(\frac{1}{\sqrt{s}}-\frac{\1_{\{s\geq t_i\}}}{\sqrt{s-t_i}}\right)\e^{-\sigma s}\nabla \exp{(-s\Delta^{(k)}_{\mu})}\,\d s.
  \end{align*}
  Therefore,
  \begin{align}\label{eqn-kernelKt}
    k_{t_i}^{\sz,k}(x,y)=\int_0^{\infty}\e^{-\sigma s}\left(\frac{1}{\sqrt{s}}-\frac{\1_{\{s\geq t_i\}}}{\sqrt{s-t_i}}\right)\nabla_x\exp{\big(-s\Delta^{(k)}_{\mu}\big)}(x,y)\,\d s.
  \end{align}
   Since $\tilde{h}_i$ is supported in $\tilde{B}_i$, we have
  \begin{align}\label{eqn-them-3}
   & \int_{2B\setminus (2\tilde{B}_i)}
    \lf|T_{\sigma}^{(k)}\lf((I-\exp{(-\Delta^{(k)}_{\mu}t_i-\sigma t_i)})\tilde{h}_i\r)(x)\r|\,\mu(\d x) \notag\\
    &\qquad \leq 
      \int_{2B\setminus (2\tilde{B}_i)}\lf(\int_{\tilde{B}_i}|k_{t_i}^{\sz,k}(x,y)|\,|h_i(y)|\,\mu(\d y)\r)\,\mu(\d x) \notag\\ 
    &\qquad \leq \int_{\tilde{B}_i}\lf(\int_{\rho(x,y)\geq t_i^{1/2}}|k_{t_i}^{\sz,k}(x,y)|\,\mu(\d x)\r)|h_i(y)|\,\mu(\d y).
  \end{align}
  Now by means of \eqref{eqn-kernelKt} and Theorem \ref{cor-1}, we
  get
  \begin{align*}
    \int_{\rho(x,y)\geq t_i^{1/2}}|k_{t_i}^{\sz, k}(x,y)|\,\mu(\d x)
    &\leq 
      \int_0^{\infty}\lf(\int_{\rho(x,y)\geq t_i^{1/2}}|\nabla_x \exp{(-\Delta_{\mu}^{(k)}s)}(x,y)|\,\mu(\d x)\r)\e^{-\sigma s}\Big|\frac{1}{\sqrt{s}}-\frac{\1_{\{s\geq t_i\}}}{\sqrt{s-t_i}}\Big | \,\d s\\
    &\leq C\int_0^{\infty} \e^{-\gamma t_i/2s}\,\e^{2C_0 s}\frac{(1+\sqrt{\sigma_3 s})}{\sqrt{s}}\Big|\frac{1}{\sqrt{s}}-\frac{\1_{\{s\geq t_i\}}}{\sqrt{s-t_i}}\Big |\,\e^{-\sigma s}\,\d s\\
    &\leq C\int_0^{\infty}\e^{-\gamma/2u}\left|\frac{1}{u}-\frac{\1_{\{u\geq 1\}}}{\sqrt{u(u-1)}}\right|\,\d u\\
    &=C \int_0^1\frac{\e^{-\gamma/2u}}{u}\,\d u +C\int_1^{\infty}\left(\frac{1}{\sqrt{u(u-1)}}-\frac{1}{u}\right)\,\d u<\infty
  \end{align*}
  where for the third line above we used the fact that
  $$\e^{s(2C_0-\sigma) }(1+\sqrt{\sigma_3 s}),\quad s\in (0,\fz),$$
  is bounded.
The estimate above together with \eqref{eqn-them-3} and Lemma
\ref{lem-czdecom} implies that
\begin{align}\label{eqn=them-6}
  \mu\lf(\lf\{x\in 2B\setminus\cup_i2\tilde{B}_i\colon
  \lf|T_{\sigma}^{(k)}\lf(\sum_i\big(I-\exp{(-\Delta^{(k)}_{\mu}t_i-\sigma t_i)}\big)\tilde{h}_i\r)(x)\r|>\frac{\lambda}{2}\r\}\r)
  \ls \frac{\|a\|_1}{\lz}.
\end{align}
Altogether, combining \eqref{eqn-them-8} through \eqref{eqn-them1-1},
\eqref{eqn-them1-2} and \eqref{eqn=them-6}, we conclude that
\eqref{eqn-localT-1} holds which completes the proof of Theorem
\ref{them2}.
\end{proof}

\section{Proof of Theorem \ref{them-CZ}}\label{s3}

By the Bishop-Gromov comparison theorem and the well-known formula for
the volume of balls in the hyperbolic space,  the local volume doubling property \eqref{LD} holds if
the curvature-dimension condition \eqref{eqn-CD} is satisfied.

\begin{proof}[Proof of Theorem \ref{them-CZ}]
Let us sketch the main idea of the second method in \cite{GP-15}.
Inequality \ref{CZ} is reduced to the existence of
positive constants $C$ and $\sigma$ such that
\begin{align*}
  \left\|\,|\Hess (\Delta_{\mu}+\sigma)^{-1}u|\,\right\|_p\leq C\|u\|_p,
\end{align*}
which is equivalent to
\begin{align*}
  \left\|\,|\nabla (\Delta_{\mu}^{(1)}+\sigma)^{-1/2}
  \circ \d(\Delta_{\mu}+\sigma)^{-1/2}u|\,\right\|_p\leq C\|u\|_p.
\end{align*}
The problem is thus reduced to the study of conditions for boundedness
of the classical Riesz transform $\d(\Delta_\mu+\sigma)^{-1/2}$ on
functions and boundedness of the covariant Riesz transform
$\nabla (\Delta^{(1)}_\mu+\sigma)^{-1/2}$ on one-forms. 

As far as the covariant Riesz transform
$\nabla (\Delta_{\mu}^{(1)}+\sigma)^{-1/2}$ on one-forms is concerned, this transform is bounded in $L^p(\mu)$ for $1<p\leq 2$  by Theorem \ref{them2}.
If the 
local volume doubling property and short time Gaussian estimate for the heat kernel hold, then
boundedness of the classical Riesz transform $\d(\Delta_{\mu}+\sigma)^{-1/2}$ holds for $1<p\leq 2$ as well.
Note that \eqref{eqn-CD} for $K_0\in \R$ implies \eqref{eqn-Ric} for $K_0$, and the curvature condition \eqref{eqn-Ric} assures the short time Gaussian estimate for the heat kernel (see \cite[Theorem 2.4.4]{Wbook14}). \end{proof}

\bibliographystyle{amsplain}
\bibliography{Covariant-RT}

\end{document}